\pgfplotsset{compat=1.5}
\newcommand{\p}[1]{\ensuremath{\mathord{\left(#1\right)}}}
\newcommand{\abs}[1]{\ensuremath{\left\vert#1\right\vert}}
\newcommand{\set}[1]{\ensuremath{\left\lbrace #1 \right\rbrace}}
\newcommand{\setcond}[2]{\ensuremath{\left\lbrace #1 \,\middle\vert \, #2 \right\rbrace}}
\newcommand{\inpr}[2]{\ensuremath{\left\langle #1, #2 \right\rangle}}
\newcommand{\RR}{\ensuremath{\mathbb{R}}}
\newcommand{\clint}[2]{\ensuremath{\left[#1, #2\right]}}
\newcommand{\opint}[2]{\ensuremath{\left(#1, #2\right)}}
\newcommand{\coint}[2]{\ensuremath{\left[#1, #2\right)}}
\newcommand{\ocint}[2]{\ensuremath{\left(#1, #2\right]}}
\newcommand{\defeq}{\ensuremath{\mathrel{\mathop:}=}}
\newcommand{\RRc}{\ensuremath{\overline{\mathbb{R}}}}
\newcommand{\Hh}{\mathcal{H}}
\newcommand{\Gg}{\mathcal{G}}
\newcommand{\norm}[1]{\left\Vert #1 \right\Vert}
\newcommand{\dd}[1]{\mathop{\mathrm{d} #1}}
\newcommand{\Id}{\ensuremath{\mathrm{Id}}}
\newcommand{\norel}{\ensuremath{\mathrel{\phantom{=}}}}
\newcommand{\dist}{\ensuremath{\mathrm{dist}}}
\newcommand{\crit}{\ensuremath{\mathrm{crit}}}
\DeclareMathOperator{\dom}{dom}
\DeclareMathOperator{\cl}{cl}
\DeclareMathOperator*{\argmin}{arg\,min}
\DeclareMathOperator{\Prox}{Prox}
\theoremstyle{plain}
\theoremstyle{definition}
\title{A general double-proximal gradient algorithm for d.c. programming}
\author{Sebastian Banert \thanks{University of Vienna, Faculty of Mathematics, Oskar-Morgenstern-Platz 1, A-1090 Vienna, Austria,
email: sebastian.banert@univie.ac.at. Research supported by FWF (Austrian Science Fund), project I 2419-N32.} \and Radu Ioan Bo\cb{t} \thanks{University of Vienna, Faculty of Mathematics, Oskar-Morgenstern-Platz 1, A-1090 Vienna, Austria,
email: radu.bot@univie.ac.at. Research supported by FWF (Austrian Science Fund), project I 2419-N32.} }
\begin{document}
\maketitle

\begin{abstract}
The possibilities of exploiting the special structure of d.c. programs, which consist of optimizing the difference of convex functions, are currently more or less limited to variants of the DCA proposed by Pham Dinh Tao and Le Thi Hoai An in 1997. These
assume that either the convex or the concave part, or both, are evaluated by one of their subgradients. 

In this paper we propose an algorithm which allows the evaluation of both the concave and the convex part by their proximal points. Additionally, we allow a smooth part, which is evaluated via its gradient. 
In the spirit of primal-dual splitting algorithms, the concave part might be the composition of a concave function with a linear operator, which are, however, evaluated separately.

For this algorithm we show that every cluster point is a solution of the optimization problem. Furthermore, we show the connection to the Toland dual problem and prove a descent property for the objective function values of a primal-dual formulation of the problem. 
Convergence of the iterates is shown if this objective function satisfies the Kurdyka--\L ojasiewicz property. In the last part, we apply the algorithm to an image processing model.\vspace{1ex}

\noindent \textbf{Key Words.} d.c. programming, Toland dual, proximal-gradient algorithm, Kurdyka--\L ojasiewicz property, convergence analysis \vspace{1ex}

\noindent \textbf{AMS subject classification.} 90C26, 90C30, 65K05
\end{abstract}

\section{Introduction}\label{sec:intro}
Optimization problems where the objective function can be written as a difference of two convex functions arise naturally in several applications, such as image processing \cite{LouZengOsherXin:2015}, machine learning \cite{ThiNguyen:2016}, 
optimal transport \cite{Carlier:2008} and sparse signal recovering \cite{GassoRakotomamonjyCanu:2009}. Generally, the class of d.c. functions is rather broad and contains for example every twice continuously differentiable function. 
For an overview over d.c. functions, see e.g. \cite{HorstThoai:1999}.

The classical approach to iteratively find local extrema of d.c. problems was described by Tao and An \cite{TaoAn:1997} in 1997 under the name DCA (d.c. algorithms). One of the most recent papers on this topic is \cite{ArtachoFlemingVuong:2016},
where an accelerated variant of the DCA method is proposed under the supplementary assumption that both the convex and the concave part are continuously differentiable. In 2003, Sun, Sampaio and Candido introduced a proximal point approach into the theory of d.c. algorithm \cite{SunSampaioCandido:2003}, where the convex part is evaluated by its proximal point operator, while its concave part is still 
evaluated by one of its subgradients. Later on, the approach in \cite{SunSampaioCandido:2003} has been extended in \cite{MaingeMoudafi:2008, AnNam:2015, DinhKimJiao:2015} by considering in the convex part a further convex smooth summand that is evaluated via its gradient.

In this paper, we go one step further by proposing an algorithm, where both convex and concave part are evaluated via proximal steps. In convex optimisation, using proximal steps instead of subgradient steps has several advantages:
\begin{itemize}
  \item The subdifferential at a point may be a non-singleton set, in particular it may be empty or may consist of several distinct elements. In an algorithm, one may get stuck or have to choose one, respectively.
  \item Even if the subdifferential is a singleton in each step, it might be highly discontinuous, so small deviations might lead to a very different behaviour of the iterations.
  \item Better convergence rates can be guaranteed for proximal algorithms than for subgradient algorithms (compare \cite{BeckTeboulle:2009} and \cite[Theorem 3.2.3]{Nesterov:2004}).
\end{itemize}

In addition, we consider a linear operator in the concave part of the objective function, which is evaluated in a forward manner in the spirit of primal-dual splitting methods.

In Section \ref{sec:problem}, we present the problem to be solved together with its Toland dual and attach to them a primal-dual formulation in form of a minimization problem, too. 
We derive first-order optimality conditions and relate the optimal solutions and the critical points of the primal-dual minimization problems to the optimal solutions and, respectively, the critical points of both primal and dual optimization problems.

In Section \ref{sec:algorithm}, we propose a double-proximal d.c. algorithm, which generates both a primal and a dual sequence of iterates and show several properties which make it comparable to DCA. 
More precisely, we prove a descent property for the objective function values of a primal-dual formulation and that every cluster point of the sequence of primal iterates is a critical point of the primal problem,
while every critical point of the sequence of dual iterates is a critical point of the dual problem.

In Section \ref{sec:KL}, we show global convergence of our algorithm and convergence rates for the iterates in some certain cases, provided that the objective function of the primal-dual reformulation satisfies the Kurdyka--\L ojasiewicz property; in other words, it is a \emph{K\L{} function}. 
The convergence analysis relies on methods and concepts of real algebraic geometry introduced by \L{}ojasiewicz \cite{Lojasiewicz:1963} and Kurdyka \cite{Kurdyka:1998} and 
later developed  in the nonsmooth setting by Attouch, Bolte and Svaiter \cite{AttouchBolteSvaiter:2013} and Bolte, Sabach and Teboulle \cite{BolteSabachTeboulle:2014}. 
One of the remarkable properties of the K\L{} functions is their ubiquity in applications (see \cite{BolteSabachTeboulle:2014}). The class of  K\L{} functions contains semi-algebraic, real sub-analytic, semiconvex, uniformly convex and 
convex functions satisfying a growth condition. 

We close our paper with some numerical examples addressing an image deblurring and denoising problem in the context of different DC regularizations.

\subsection{Notation and preliminaries}

For the theory of convex analysis in finite-dimensional spaces, see the book \cite{Rockafellar:1970b}.
We shall consider functions taking values in the \emph{extended real line} $\RRc \defeq \RR \cup \set{+\infty, -\infty}$. We agree on the order $-\infty < a < +\infty$ for any real number $a$ and the operations
\begin{align*}
  +\infty + a = a + \infty = +\infty - \infty = -\infty + \infty = +\infty + \infty &= +\infty, \\
  -\infty + a = a - \infty = -\infty - \infty &= -\infty, \\
   0 \cdot \p{-\infty} = 0, 0 \cdot \p{+\infty} &= +\infty
  \end{align*}
for arbitrary $a \in \RR$ (see \cite{Zalinescu:2002}). Let $\Hh$ be a real finite-dimensional Hilbert space. For a function $f: \Hh \to \RRc$, we denote by
\[
  \dom f \defeq \setcond{x\in \Hh}{f\p{x} < +\infty}
\]
its \emph{domain}. The function $f$ is called \emph{proper} if it does not take the value $-\infty$ and $\dom f \neq \emptyset$. It is called \emph{convex} if
\[
  f\p{\p{1 - \lambda} x + \lambda y} \leq \p{1 - \lambda} f\p{x} + \lambda f\p{y}
\]
for all $x, y \in \Hh$ and $0 \leq \lambda \leq 1$. The \emph{conjugate function} $f^*: \Hh \to \RRc$ of $f: \Hh \to \RRc$ is defined by
\[
  f^*\p{x^*} = \sup\setcond{\inpr{x^*}{x} - f\p{x}}{x\in \Hh}.
\]
If $f$ is proper, convex and lower semicontinuous, then $f^{**} \defeq \p{f^*}^* = f$ by the Fenchel--Moreau theorem.

The \emph{convex subdifferential} $\partial f\p{x}$ at $x\in \Hh$ of a function $f: \Hh \to \RRc$ is empty if $x \notin \dom f$ and
\[
  \partial f\p{x} = \setcond{x^* \in \Hh}{\forall y\in \Hh: f\p{y} \geq f\p{x} + \inpr{x^*}{y - x}}
\]
otherwise. Let $\gamma > 0$ and $f: \Hh \to \RRc$ be proper, convex and lower semicontinuous. The \emph{proximal point} $\Prox_{\gamma f}\p{x}$ of $\gamma f$ at $x\in \Hh$ is defined as
\[
  \Prox_{\gamma f}\p{x} = \argmin\setcond{\gamma f\p{y} + \frac{1}{2} \norm{y - x}^2}{y \in \Hh}.
\]
The set of minimizers in the definition above is a singleton \cite[Proposition 12.15]{BauschkeCombettes:2011}, and the proximal point is characterised by the variational inequality \cite[Proposition 12.26]{BauschkeCombettes:2011}
\[
  f\p{y} \geq f\p{\Prox_{\gamma f}\p{x}} + \frac{1}{2\gamma} \inpr{y - \Prox_{\gamma f}\p{x}}{x - \Prox_{\gamma f}\p{x}}
\]
for all $y\in \Hh$, which is equivalent to
\begin{equation}\label{eq:prox_subdiff_inclusion}
  \frac{1}{\gamma} \p{x - \Prox_{\gamma f}\p{x}} \in \partial f\p{\Prox_{\gamma f}\p{x}}.
\end{equation}

When dealing with nonconvex and nonsmooth functions, we have to consider subdifferentials more general than the convex one. The \emph{Fr\'echet subdifferential} $\partial_F f\p{x}$ at $x\in \Hh$ of a proper and lower semicontinuous function $f: \Hh \to \RRc$ is empty if $x \notin \dom f$ and
\[
  \partial_F f\p{x} = \setcond{x^* \in \Hh}{\liminf_{\substack{y \to x \\ y \neq x}} \frac{f\p{y} - f\p{x} - \inpr{x^*}{y - x}}{\norm{y - x}} \geq 0}
\]
otherwise. The \emph{limiting (Mordukhovich) subdifferential} $\partial_L f\p{x}$ at $x\in \Hh$ of a proper and lower semicontinuous function $f: \Hh \to \RRc$ is empty if $x\notin \dom f$ and
\begin{multline*}
  \partial_L f\p{x} = \biggl\{x^* \in \Hh \, \bigg\vert \,\exists \p{x_k}_{k\geq 0}, \p{x^*_k}_{k\geq 0}: x_k \in \Hh, x^*_k \in \partial_F f\p{x_k}, k \geq 0, \\
  x_k \to x, f\p{x_k} \to f\p{x}, x^*_k \to x^* \text{ as } k\to +\infty\biggr\}
\end{multline*}
otherwise.

\section{Problem statement}\label{sec:problem}

Let $\Gg$ and $\Hh$ be real finite-dimensional Hilbert spaces, let $g: \Hh \to \RRc$ and $h: \Gg \to \RRc$ be proper, convex and lower semicontinuous functions, let $\varphi: \Hh \to \RR$ be a convex, 
Fr\'echet differentiable function with $\frac{1}{\beta}$-Lipschitz continuous gradient, for some $\beta > 0$, and let $K: \Hh \to \Gg$ be a linear mapping (and $K^*: \Gg \to \Hh$ its adjoint). We consider the problem
\begin{equation}\label{eq:DC_Problem}
  \min \setcond{g\p{x} + \varphi\p{x} - h\p{Kx}}{x\in \Hh}
\end{equation}
together with its Toland dual problem \cite{Toland:1979, Toland:1978}
\begin{equation}\label{eq:DC_Toland_dual}
  \min \setcond{h^*\p{y} - \p{g + \varphi}^*\p{K^* y}}{y\in \Gg}.
\end{equation}
The following primal-dual formulation will turn out to be useful in the sequel:
\begin{equation}\label{eq:DC_pd}
  \min \setcond{\Phi\p{x, y}}{x \in \Hh, y\in \Gg} \qquad \text{with } \Phi\p{x, y} \defeq g\p{x} + \varphi\p{x} + h^*\p{y} - \inpr{y}{Kx},
\end{equation}
where $\Phi: \Hh \times \Gg \to \RRc$ is proper and lower semicontinuous.

Let us derive necessary optimality conditions for the problems \eqref{eq:DC_Problem}, \eqref{eq:DC_Toland_dual} and \eqref{eq:DC_pd}:
\begin{proposition}\begin{enumerate}
    \item The optimal values of \eqref{eq:DC_Problem}, \eqref{eq:DC_Toland_dual} and \eqref{eq:DC_pd} are equal.
    \item For all $x \in \Hh$ and $y \in \Gg$,
      \begin{align*}
        \Phi\p{x, y} &\geq g\p{x} + \varphi\p{x} - h\p{Kx}\qquad \text {and} \\
        \Phi\p{x, y} &\geq h^*\p{y} - \p{g + \varphi}^*\p{y} \p{K^* y}.
      \end{align*}
    \item Let $\bar x \in \Hh$ be a solution of \eqref{eq:DC_Problem}. Then $\partial\p{h\circ K}\p{\bar x} \subseteq \partial g\p{\bar x} + \nabla\varphi\p{\bar x}$.
    \item Let $\bar y \in \Gg$ be a solution of \eqref{eq:DC_Toland_dual}. Then $\partial \p{\p{g + \varphi}^* \circ K^*}\p{\bar y} \subseteq \partial h^*\p{\bar y}$.
    \item Let $\p{\bar x, \bar y}\in \Hh \times \Gg$ be a solution of \eqref{eq:DC_pd}. Then $\bar x$ is a solution of \eqref{eq:DC_Problem}, and $\bar y$ is a solution of \eqref{eq:DC_Toland_dual}. Furthermore, the inclusions
      \begin{align}
        K^* \bar y &\in \partial g\p{\bar x} + \nabla \varphi\p{\bar x}, \label{eq:stat_incl1} \\
        K\bar x &\in \partial h^*\p{\bar y} \label{eq:stat_incl2}
      \end{align}
      hold.
  \end{enumerate}
\end{proposition}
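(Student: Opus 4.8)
The plan is to reduce everything to two elementary tools: the Fenchel--Young inequality together with the Fenchel--Moreau identity $h^{**} = h$, and the following \emph{d.c.\ optimality principle} --- if $\bar z$ is a global minimizer of a difference $\psi - \phi$ of two proper convex functions, then $\partial\phi\p{\bar z} \subseteq \partial\psi\p{\bar z}$. Throughout I write $\psi \defeq g + \varphi$, which is convex, finite-valued and continuous because $\varphi$ is.

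The two inequalities are immediate from Fenchel--Young: $h^*\p{y} \geq \inpr{y}{Kx} - h\p{Kx}$ rearranges to the first, and $\psi\p{x} \geq \inpr{K^*y}{x} - \psi^*\p{K^*y} = \inpr{y}{Kx} - \psi^*\p{K^*y}$ rearranges to the second. For the equality of the three optimal values, the key is to compute the two partial infima of $\Phi$. Fixing $x$ and minimizing over $y$ gives $\inf_y \Phi\p{x, y} = g\p{x} + \varphi\p{x} - h^{**}\p{Kx} = g\p{x} + \varphi\p{x} - h\p{Kx}$ by Fenchel--Moreau, so taking the infimum over $x$ recovers the value of \eqref{eq:DC_Problem}. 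Symmetrically, fixing $y$ and minimizing over $x$ yields $\inf_x \Phi\p{x, y} = h^*\p{y} - \psi^*\p{K^*y}$, whose infimum over $y$ is the value of \eqref{eq:DC_Toland_dual}. Since both iterated infima equal $\inf_{x, y} \Phi$, all three optimal values coincide.

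The two subdifferential inclusions are instances of the d.c.\ optimality principle, which I would prove in one line: for $x^* \in \partial\phi\p{\bar z}$, combining the subgradient inequality $\phi\p{z} \geq \phi\p{\bar z} + \inpr{x^*}{z - \bar z}$ with the optimality inequality $\psi\p{z} - \phi\p{z} \geq \psi\p{\bar z} - \phi\p{\bar z}$ yields $\psi\p{z} \geq \psi\p{\bar z} + \inpr{x^*}{z - \bar z}$, that is $x^* \in \partial\psi\p{\bar z}$. For the primal I apply this with $\phi = h \circ K$ and $\psi = g + \varphi$, obtaining $\partial\p{h \circ K}\p{\bar x} \subseteq \partial\p{g + \varphi}\p{\bar x}$, and then invoke the Moreau--Rockafellar sum rule $\partial\p{g + \varphi}\p{\bar x} = \partial g\p{\bar x} + \nabla\varphi\p{\bar x}$, which is valid because $\varphi$ is finite and continuous everywhere. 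For the dual the same principle applies to \eqref{eq:DC_Toland_dual} with $\phi = \psi^* \circ K^*$ and $\psi = h^*$; here no splitting is needed.

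For the last assertion, the observation is that a joint minimizer of $\Phi$ is a minimizer in each coordinate separately. Hence $\bar x$ minimizes the convex function $x \mapsto \Phi\p{x, \bar y} = \psi\p{x} - \inpr{K^*\bar y}{x} + h^*\p{\bar y}$, whose first-order condition $0 \in \partial\psi\p{\bar x} - K^*\bar y$ gives \eqref{eq:stat_incl1} after the sum rule; likewise $\bar y$ minimizes $y \mapsto \Phi\p{\bar x, y} = h^*\p{y} - \inpr{y}{K\bar x} + \psi\p{\bar x}$, giving $K\bar x \in \partial h^*\p{\bar y}$, which is \eqref{eq:stat_incl2}. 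Finally, the partial-infimum identities already established show that $\Phi\p{\bar x, \bar y} = g\p{\bar x} + \varphi\p{\bar x} - h\p{K\bar x}$ equals the optimal value of \eqref{eq:DC_Problem}, so $\bar x$ solves the primal problem, and symmetrically $\bar y$ solves the dual. The only place where I expect genuine care to be needed is in the marginal computations: one must check that the conjugation manipulations do not produce spurious $\pm\infty$ and that $h^{**} = h$ is legitimately applicable, and that the repeated appeals to the sum rule are justified by the global continuity of $\varphi$.
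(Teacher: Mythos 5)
Your proposal follows essentially the same route as the paper: item 1 via the Fenchel--Moreau identity $h^{**}=h$ and interchange of infima (your marginal-infimum formulation is just a repackaging of the paper's chain of equalities), item 2 via Fenchel--Young, items 3--4 by adding the subgradient inequality to the optimality inequality, and item 5 via coordinatewise first-order conditions plus the Moreau--Rockafellar sum rule. The one genuinely nice variation is the end of item 5: you obtain that $\bar x$ solves \eqref{eq:DC_Problem} and $\bar y$ solves \eqref{eq:DC_Toland_dual} directly from the partial-infimum identities of item 1, whereas the paper passes through equality in the Young--Fenchel inequality; your route is slightly more economical and equally valid.

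There is, however, one concrete gap, and it sits exactly where the paper spends its care. Your ``one-line'' d.c.\ principle adds $\phi\p{z} \geq \phi\p{\bar z} + \inpr{x^*}{z - \bar z}$ to $\psi\p{z} - \phi\p{z} \geq \psi\p{\bar z} - \phi\p{\bar z}$ and cancels $\phi\p{z}$ against $-\phi\p{z}$. Under the paper's extended-real conventions this cancellation is invalid when $\phi\p{z} = +\infty$: in that case the left-hand side of the sum is $+\infty$ regardless of $\psi\p{z}$, so the added inequality does not yield $\psi\p{z} \geq \psi\p{\bar z} + \inpr{x^*}{z - \bar z}$. The fix is the observation the paper makes explicitly (its condition that $h\p{Kx} < +\infty$ for all $x \in \dom g$): since $x^* \in \partial \phi\p{\bar z}$ forces $\phi\p{\bar z}$ finite and $\psi$ is proper, the optimal value $\psi\p{\bar z} - \phi\p{\bar z}$ is $> -\infty$, and then $\psi\p{z} < +\infty$ with $\phi\p{z} = +\infty$ would give $\psi\p{z} - \phi\p{z} = -\infty$, contradicting global optimality; hence $\dom \psi \subseteq \dom \phi$ and all quantities in the addition are finite (or the claim is trivial because $\psi\p{z} = +\infty$). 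Note also that your closing caveat points the finger at the wrong spot: the marginal computations and the repeated sum-rule appeals are unproblematic (the conventions and the finiteness plus continuity of $\varphi$ handle them), while the $\pm\infty$ issue lives in the d.c.\ principle itself. Since the missing observation is a two-line repair, I would call this a rigor gap rather than a wrong approach.
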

\begin{proof}
  \begin{enumerate}
    \item By the Fenchel--Moreau theorem, applied to $h$, we have
      \begin{align*}
        &\norel \inf \setcond{g\p{x} + \varphi\p{x} - h\p{Kx}}{x\in \Hh} \\
        &= \inf \setcond{g\p{x} + \varphi\p{x} - h^{**}\p{Kx}}{x\in \Hh} \\
        &= \inf \setcond{g\p{x} + \varphi\p{x} - \sup \setcond{\inpr{y}{Kx} - h^*\p{y}}{y\in \Gg}}{x \in \Hh} \\
        &= \inf \setcond{g\p{x} + \varphi\p{x} + h^*\p{y} - \inpr{y}{Kx}}{x \in \Hh, y \in \Gg} \\
        &= \inf \setcond{h^*\p{y} - \sup\setcond{\inpr{x}{K^* y} - \p{g + \varphi}\p{x}}{x\in \Hh}}{y \in \Gg} \\
        &= \inf \setcond{h^*\p{y} - \p{g + \varphi}^*\p{K^* y}}{y \in \Gg}.
      \end{align*}
    \item Let $x \in \Hh$ and $y \in \Gg$. Then,
      \begin{align*}
        g\p{x} + \varphi\p{x} - h\p{Kx}
        &= g\p{x} + \varphi\p{x} - h^{**}\p{Kx} \\
        &= g\p{x} + \varphi\p{x} - \sup\setcond{\inpr{Kx}{\tilde y} - h^*\p{\tilde y}}{\tilde y \in \Gg} \\
        &\leq g\p{x} + \varphi\p{x} - \inpr{Kx}{y} + h^*\p{y},
      \end{align*}
      and the other inequality is verified by an analogous calculation.
    \item Let $\bar x \in \Hh$ be a solution of \eqref{eq:DC_Problem}, i.e.,
      \begin{equation}\label{eq:necessary_condition_solution}
        \forall x\in \Hh: g\p{x} + \varphi\p{x} - h\p{Kx} \geq g\p{\bar x} + \varphi\p{\bar x} - h\p{K\bar x}.
      \end{equation}
      If $h\p{K\bar x} = +\infty$, then, by definition, $\partial\p{h \circ K}\p{\bar x} = \emptyset$, and the inclusion automatically holds. If $h\p{K\bar x} < +\infty$, then the optimal value of \eqref{eq:DC_Problem} must be $> -\infty$, which implies
      \begin{equation}\label{eq:necessary_condition_boundedness}
        h\p{Kx} < +\infty \qquad \text{for all } x \in \dom g.
      \end{equation}
      Now let $y \in \partial \p{h \circ K}\p{\bar x}$. Then
      \begin{equation}\label{eq:necessary_condition_subdifferential}
        \forall x\in \Hh: h\p{Kx} \geq h\p{K\bar x} + \inpr{y}{x - \bar x}.
      \end{equation}
      Adding \eqref{eq:necessary_condition_solution} and \eqref{eq:necessary_condition_subdifferential} yields
      \begin{equation}\label{eq:necessary_condition_sum}
        \forall x\in \Hh: g\p{x} + \varphi\p{x} \geq g\p{\bar x} + \varphi\p{\bar x} + \inpr{y}{x - \bar x}.
      \end{equation}
      If $g\p{x} = +\infty$, then \eqref{eq:necessary_condition_sum} is automatically satisfied, otherwise $x \in \dom g$ and, by \eqref{eq:necessary_condition_boundedness}, $h\p{Kx} < +\infty$, and both sides of both \eqref{eq:necessary_condition_solution} and \eqref{eq:necessary_condition_subdifferential} are finite. In either case, we have shown $y \in \partial \p{g + \varphi}\p{\bar x} = \partial g\p{\bar x} + \nabla \varphi\p{\bar x}$.
    \item The proof of this statement is analogous.
    \item Let $\p{\bar x, \bar y}$ be a solution of \eqref{eq:DC_pd}. (In particular, if such a solution exists, the common optimal value of \eqref{eq:DC_Problem}, \eqref{eq:DC_Toland_dual} and \eqref{eq:DC_pd} must be finite.) The function $x \mapsto \Phi\p{x, \bar y}$ is convex and takes a minimum at $\bar x$. Therefore
      \[
        0 \in \partial g\p{\bar x} + \nabla \varphi\p{\bar x} - K^* \bar y,
      \]
      which proves \eqref{eq:stat_incl1}. The same argument works for the function $y \mapsto \Phi\p{\bar x, y}$ and implies
      \[
        0 \in \partial h^*\p{\bar y} - K\bar x,
      \]
      which is \eqref{eq:stat_incl2}. For these inclusions, we obtain equality in the Young--Fenchel inequality, i.e.,
      \begin{align*}
        \p{g + \varphi}^*\p{K^* \bar y} + \p{g + \varphi}\p{\bar x} &= \inpr{\bar x}{K^* \bar y}, \\
        h^*\p{\bar y} + h\p{K\bar x} &= \inpr{\bar y}{K\bar x}.
      \end{align*}
      Therefore,
      \begin{align*}
        \p{g + \varphi}\p{\bar x} - h\p{K\bar x} &= h^*\p{\bar y} - \p{g + \varphi}^*\p{K^* \bar y} \\
        &= h^*\p{\bar y} - \sup\setcond{\inpr{x}{K^*\bar y} - g\p{x} - \varphi\p{x}}{x\in \Hh} \\
        &\leq h^*\p{\bar y} + g\p{\bar x} + \varphi\p{\bar x} - \inpr{\bar x}{K^* \bar y}.
      \end{align*}
      Since $\p{\bar x, \bar y}$ is a solution of \eqref{eq:DC_pd}, the last expression equals the common optimal value of \eqref{eq:DC_Problem}, \eqref{eq:DC_Toland_dual} and \eqref{eq:DC_pd}. \qedhere
  \end{enumerate}
\end{proof}

\begin{definition}
  We say that $\p{\bar x, \bar y} \in \Hh \times \Gg$ is a \emph{critical point} of the objective function $\Phi$ of \eqref{eq:DC_pd} if the inclusions \eqref{eq:stat_incl1} and \eqref{eq:stat_incl2} are satisfied. We denote
  by  $\crit \Phi$ the set of critical points of the function $\Phi$.
\end{definition}
\begin{remark}\label{rem:critical}
If $\p{\bar x, \bar y} \in \Hh \times \Gg$ is a critical point of $\Phi$, then
\begin{align}
  K^* \bar y &\in K^* \partial h\p{K\bar x} \cap \p{\partial g\p{\bar x} + \nabla \varphi\p{\bar x}}, \label{eq:primal_critical} \\
  K \bar x &\in K \partial \p{g + \varphi}^* \p{K^* \bar y} \cap \partial h^*\p{\bar y}. \label{eq:dual_critical}
\end{align}
By adopting the terminology of e.g. \cite[p. 297]{TaoAn:1997}, we denote by
$$\crit \p{g + \varphi - h \circ K}:= \{x \in \Hh: K^* \partial h\p{K x} \cap \p{\partial g\p{x} + \nabla \varphi\p{x}} \neq \emptyset\}$$
the set of critical points of the objective function $g + \varphi - h \circ K$ of \eqref{eq:DC_Problem} and by
$$\crit\p{h^* - \p{g + \varphi}^* \circ K^*}:=  \{y \in \Gg: K \partial \p{g + \varphi}^* \p{K^*y} \cap \partial h^*\p{y} \neq \emptyset\}$$
the set of critical points of the objective function $h^* - \p{g + \varphi}^* \circ K^*$ of \eqref{eq:DC_Toland_dual}. (Recall that $K^* \partial h\p{Kx} \subseteq \partial \p{h \circ K}\p{x}$ and 
$K \partial \p{g + \varphi}^* \p{K^*y} \subseteq \partial \p{\p{g + \varphi} \circ K^*}\p{y}$.)

Thus, if $\p{\bar x, \bar y} \in \Hh \times \Gg$ is a critical point of the objective function $\Phi$, then $\bar x$ is a critical point of $g + \varphi - h \circ K$ and $\bar y$ is a critical point of 
$h^* - \p{g + \varphi}^* \circ K^*$. 
\end{remark}

\section{The algorithm}\label{sec:algorithm}
Let $(x_0, y_0) \in \Hh \times \Gg$, and let $\p{\gamma_n}_{n\geq 0}$ and $\p{\mu_n}_{n\geq 0}$ be sequences of positive numbers. We propose the following iterative scheme: For all $n\geq 0$ set
\begin{align}
  x_{n+1} &= \Prox_{\gamma_n g} \p{x_n + \gamma_n K^* y_n - \gamma_n \nabla \varphi\p{x_n}}, \label{eq:FBDC_alg:1}\\
  y_{n+1} &= \Prox_{\mu_n h^*} \p{y_n + \mu_n K x_{n+1}}. \label{eq:FBDC_alg:2}
\end{align}

By the inequalities for the proximal points, we have, for every $x, y \in \Hh$ and $n\geq 0$,
\begin{align*}
  g\p{x_{n+1}} - g\p{x} &\leq \frac{1}{\gamma_n} \inpr{x_n + \gamma_n K^* y_n - \gamma_n \nabla \varphi\p{x_n} - x_{n+1}}{x_{n+1} - x} \\
  &= \frac{1}{\gamma_n} \inpr{x_n - x_{n+1}}{x_{n+1} - x} + \inpr{K^* y_n}{x_{n+1} - x} - \!\inpr{\nabla \varphi\p{x_n}}{x_{n+1} - x}, \\
  h^*\p{y_{n+1}} - h^*\p{y} &\leq \frac{1}{\mu_n} \inpr{y_n + \mu_n K x_{n+1} - y_{n+1}}{y_{n+1} - y} \\
  &= \frac{1}{\mu_n} \inpr{y_n - y_{n+1}}{y_{n+1} - y} + \inpr{K x_{n+1}}{y_{n+1} - y}.
\end{align*}
Moreover, using \cite[Theorem 18.15 (iii)]{BauschkeCombettes:2011} and the subdifferential inequality, we have for every $x \in \Hh$ and $n\geq 0$,
\begin{align*}
  \varphi\p{x_{n+1}} - \varphi\p{x_n} &\leq \inpr{\nabla \varphi\p{x_n}}{x_{n+1} - x_n} + \frac{1}{2\beta} \norm{x_n - x_{n+1}}^2, \\
  \varphi\p{x_n} - \varphi\p{x} &\leq \inpr{\nabla \varphi\p{x_n}}{x_n - x}.
\end{align*}
We consider the auxiliary function $\Phi: \Hh \times \Gg \to \RRc$ defined by
\begin{align*}
  \Phi\p{x, y} = g\p{x} + \varphi\p{x} + h^*\p{y} - \inpr{y}{Kx}.
\end{align*}
By the inequalities above, we have, for arbitrary $x\in \Hh$, $y \in \Gg$ and $n\geq 0$,
\begin{align}
  &\norel \Phi\p{x_{n+1}, y_{n+1}} - \Phi\p{x, y} \nonumber \\
  &= g\p{x_{n+1}} - g\p{x} + \varphi\p{x_{n+1}} - \varphi\p{x} + h^*\p{y_{n+1}} - h^*\p{y} + \inpr{y}{Kx} - \inpr{y_{n+1}}{Kx_{n+1}} \nonumber \\
  &\leq \frac{1}{\gamma_n} \inpr{x_n - x_{n+1}}{x_{n+1} - x} + \frac{1}{\mu_n}\inpr{y_n - y_{n+1}}{y_{n+1} - y} + \frac{1}{2\beta} \norm{x_n - x_{n+1}}^2 \nonumber \\
  &\qquad \mathop{+} \inpr{K\p{x - x_{n+1}}}{y - y_n}. \label{eq:FBCD_pdbound}
\end{align}
Furthermore, for any $n \geq 0$,
\begin{align}
  \Phi\p{x_{n+1}, y_n} - \Phi\p{x_n, y_n} &= g\p{x_{n+1}} + \varphi\p{x_{n+1}} - g\p{x_n} - \varphi\p{x_n} + \! \inpr{K^* y_n}{x_n - x_{n+1}} \nonumber \\
  &\leq \p{\frac{1}{2\beta} - \frac{1}{\gamma_n}} \norm{x_n - x_{n+1}}^2, \label{eq:est_Phi_norm_x} \\
  \Phi\p{x_{n+1}, y_{n+1}} - \Phi\p{x_{n+1}, y_n} &= h^*\p{y_{n+1}} - h^*\p{y_n} + \inpr{y_n - y_{n+1}}{K x_{n+1}} \nonumber \\
  &\leq -\frac{1}{\mu_n} \norm{y_n - y_{n+1}}^2. \label{eq:est_Phi_norm_y}
\end{align}
The last two inequalities give rise to the following statement.

\begin{proposition}\label{prop:FBCD_monotonicity}
  For each $n\geq 0$, we have
  \[
    \Phi\p{x_{n+1}, y_{n+1}} \leq \Phi\p{x_{n+1}, y_n} \leq \Phi\p{x_n, y_n},
  \]
  provided that $0 < \gamma_n \leq 2\beta$.
\end{proposition}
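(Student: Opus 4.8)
The plan is to read off both inequalities directly from the two estimates \eqref{eq:est_Phi_norm_x} and \eqref{eq:est_Phi_norm_y}, which have already been established for arbitrary $n \geq 0$ immediately before the statement. Since these estimates do all of the analytic work, what remains is only to check the signs of their right-hand sides, so I expect the argument to be very short.

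For the right inequality $\Phi\p{x_{n+1}, y_n} \leq \Phi\p{x_n, y_n}$, I would invoke \eqref{eq:est_Phi_norm_x}, which bounds the difference $\Phi\p{x_{n+1}, y_n} - \Phi\p{x_n, y_n}$ from above by $\p{\frac{1}{2\beta} - \frac{1}{\gamma_n}} \norm{x_n - x_{n+1}}^2$. The squared norm is nonnegative, so this expression is $\leq 0$ precisely when the scalar factor $\frac{1}{2\beta} - \frac{1}{\gamma_n}$ is nonpositive, i.e.\ when $\frac{1}{\gamma_n} \geq \frac{1}{2\beta}$. Because $\gamma_n > 0$ and $\beta > 0$, this is equivalent to $\gamma_n \leq 2\beta$, which is exactly the standing hypothesis. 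This is the only point at which the assumption $0 < \gamma_n \leq 2\beta$ enters, and identifying this sign condition correctly is the single (and rather minor) obstacle in the proof.

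For the left inequality $\Phi\p{x_{n+1}, y_{n+1}} \leq \Phi\p{x_{n+1}, y_n}$, I would use \eqref{eq:est_Phi_norm_y}, which bounds the difference from above by $-\frac{1}{\mu_n} \norm{y_n - y_{n+1}}^2$. Since $\mu_n > 0$ and the squared norm is nonnegative, this bound is automatically $\leq 0$, with no additional restriction on the step sizes. Chaining the two inequalities then immediately yields the claimed two-step monotonicity $\Phi\p{x_{n+1}, y_{n+1}} \leq \Phi\p{x_{n+1}, y_n} \leq \Phi\p{x_n, y_n}$, completing the argument.
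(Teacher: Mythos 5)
Your proposal is correct and is precisely the paper's own argument: the paper presents \eqref{eq:est_Phi_norm_x} and \eqref{eq:est_Phi_norm_y} immediately before the proposition and notes that these two inequalities give rise to the statement, with the sign condition $\gamma_n \leq 2\beta$ entering exactly where you place it. Nothing is missing.
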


\begin{proposition}\label{prop:FBCD_summability}
  Let
  \begin{equation}\label{eq:stepsize_conditions}
    0 < \inf_{n_\geq 0} \gamma_n \leq \sup_{n\geq 0} \gamma_n < 2\beta \qquad \text{and} \qquad 0 < \inf_{n\geq 0} \mu_n \leq \sup_{n\geq 0} \mu_n < +\infty.
  \end{equation}
  Furthermore, let $\inf\setcond{g\p{x} + \varphi\p{x} - h\p{Kx}}{x\in \Hh} > -\infty$. Then,
  \[
    \sum_{n \geq 0} \norm{x_n - x_{n+1}}^2 < +\infty \qquad \text{and} \qquad \sum_{n\geq 0} \norm{y_n - y_{n+1}}^2 < +\infty.
  \]
\end{proposition}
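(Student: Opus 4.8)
The plan is to combine the two one-step estimates \eqref{eq:est_Phi_norm_x} and \eqref{eq:est_Phi_norm_y} into a single descent inequality, to exploit the step-size conditions \eqref{eq:stepsize_conditions} in order to extract uniform positive constants in front of the squared norms, and then to telescope the resulting inequality, using the lower-boundedness of $\Phi$ along the sequence to bound the partial sums independently of the horizon.

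First I would add \eqref{eq:est_Phi_norm_x} and \eqref{eq:est_Phi_norm_y}, which gives, for every $n \geq 0$,
\[
  \Phi\p{x_{n+1}, y_{n+1}} - \Phi\p{x_n, y_n} \leq \p{\frac{1}{2\beta} - \frac{1}{\gamma_n}} \norm{x_n - x_{n+1}}^2 - \frac{1}{\mu_n} \norm{y_n - y_{n+1}}^2.
\]
Under \eqref{eq:stepsize_conditions}, setting $c_1 \defeq \frac{1}{\sup_{n \geq 0} \gamma_n} - \frac{1}{2\beta}$ and $c_2 \defeq \frac{1}{\sup_{n\geq 0}\mu_n}$, I obtain $c_1 > 0$ (since $\sup_{n\geq 0} \gamma_n < 2\beta$) and $c_2 > 0$ (since $\sup_{n\geq 0} \mu_n < +\infty$), and the coefficients above satisfy $\frac{1}{2\beta} - \frac{1}{\gamma_n} \leq -c_1$ and $-\frac{1}{\mu_n} \leq -c_2$. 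Hence
\[
  c_1 \norm{x_n - x_{n+1}}^2 + c_2 \norm{y_n - y_{n+1}}^2 \leq \Phi\p{x_n, y_n} - \Phi\p{x_{n+1}, y_{n+1}}.
\]

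Next I would sum this inequality from $n = 1$ to $N$; the right-hand side telescopes to $\Phi\p{x_1, y_1} - \Phi\p{x_{N+1}, y_{N+1}}$. Here $\Phi\p{x_1, y_1} < +\infty$ because the proximal steps place $x_1 \in \dom g$ and $y_1 \in \dom h^*$ while $\varphi$ is real-valued, which also justifies starting the sum at $n = 1$ rather than $n = 0$. To bound the telescoped expression from above uniformly in $N$, I would invoke the second statement of the first Proposition, namely $\Phi\p{x, y} \geq g\p{x} + \varphi\p{x} - h\p{Kx}$, together with the hypothesis $m \defeq \inf\setcond{g\p{x} + \varphi\p{x} - h\p{Kx}}{x \in \Hh} > -\infty$, to conclude $\Phi\p{x_{N+1}, y_{N+1}} \geq m$. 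This yields
\[
  \sum_{n=1}^{N} \p{c_1 \norm{x_n - x_{n+1}}^2 + c_2 \norm{y_n - y_{n+1}}^2} \leq \Phi\p{x_1, y_1} - m,
\]
a bound independent of $N$. Letting $N \to +\infty$ and recalling $c_1, c_2 > 0$ together with the nonnegativity of all terms, both $\sum_{n\geq 0} \norm{x_n - x_{n+1}}^2$ and $\sum_{n\geq 0} \norm{y_n - y_{n+1}}^2$ converge, the single omitted term at $n = 0$ being finite.

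The main obstacle, such as it is, lies in securing the uniform lower bound on $\Phi\p{x_{N+1}, y_{N+1}}$: this is precisely where the assumption that the primal objective is bounded below enters, funneled through the inequality $\Phi \geq g + \varphi - h \circ K$ proved earlier. Everything else is a telescoping argument made quantitative by the strict separation $\sup_{n\geq 0} \gamma_n < 2\beta$, which is the sole reason the coefficient $\frac{1}{2\beta} - \frac{1}{\gamma_n}$ stays bounded away from zero and thus the reason the strict inequalities (rather than the weaker $\gamma_n \leq 2\beta$ of Proposition \ref{prop:FBCD_monotonicity}) are imposed in \eqref{eq:stepsize_conditions}.
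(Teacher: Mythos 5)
Your proof is correct and takes essentially the same route as the paper's: sum the per-step estimates \eqref{eq:est_Phi_norm_x} and \eqref{eq:est_Phi_norm_y}, telescope, bound the $\Phi$-values along the iterates from below via the inequality $\Phi \geq g + \varphi - h \circ K$ combined with the hypothesis, and use the strict step-size bounds in \eqref{eq:stepsize_conditions} to keep the coefficients uniformly positive. The only difference is a minor refinement: you start the telescoping at $n = 1$ and justify $\Phi\p{x_1, y_1} < +\infty$ via the proximal steps, whereas the paper telescopes from $\p{x_0, y_0}$ and tacitly assumes the corresponding finiteness there.
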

\begin{proof}
  Let $N \geq 1$ be an integer. Sum up \eqref{eq:est_Phi_norm_x} and \eqref{eq:est_Phi_norm_y} for $n = 0, \ldots, N-1$ and obtain
  \[
    \Phi\p{x_N, y_N} - \Phi\p{x_0, y_0} \leq \sum_{n = 0}^{N-1} \p{\frac{1}{2\beta} - \frac{1}{\gamma_n}} \norm{x_n - x_{n+1}}^2 - \sum_{n = 0}^{N-1} \frac{1}{\mu_n} \norm{y_n - y_{n+1}}^2.
  \]
  By assumption, the expression on the left-hand side is bounded below by a fixed real number $M$ for any $N \geq 1$, and so is the right-hand side. The numbers $\p{\frac{1}{\gamma_n} - \frac{1}{2\beta}}$ and $\frac{1}{\mu_n}$ are bounded below by a positive number, say $\varepsilon > 0$, so
  \[
    \sum_{n = 0}^{N-1} \norm{x_n - x_{n+1}}^2 + \sum_{n = 0}^{N-1} \norm{y_n - y_{n+1}}^2 \leq -\frac{M}{\varepsilon}.
  \]
  Since $N$ is arbitrary, the series converge.
\end{proof}
\begin{proposition}\label{prop:FBCD_clusterponts}
  Let $\inf\setcond{g\p{x} + \varphi\p{x} - h\p{Kx}}{x\in \Hh} > -\infty$ and \eqref{eq:stepsize_conditions} be satisfied. If $\p{x_n}_{n\geq 0}$ and $\p{y_n}_{n\geq 0}$ are bounded, then 
  \begin{enumerate}
    \item every cluster point of $\p{x_n}_{n\geq 0}$ is a critical point of \eqref{eq:DC_Problem},
    \item every cluster point of $\p{y_n}_{n\geq 0}$ is a critical point of \eqref{eq:DC_Toland_dual} and
    \item every cluster point of $\p{x_n, y_n}_{n\geq 0}$ is a critical point of \eqref{eq:DC_pd}.
  \end{enumerate}
\end{proposition}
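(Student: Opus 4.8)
The plan is to combine the asymptotic vanishing of the successive differences, guaranteed by Proposition \ref{prop:FBCD_summability}, with the closedness of the graph of the convex subdifferential. Since both series there converge, we have $\norm{x_n - x_{n+1}} \to 0$ and $\norm{y_n - y_{n+1}} \to 0$ as $n \to \infty$. First I would rewrite the two proximal steps \eqref{eq:FBDC_alg:1} and \eqref{eq:FBDC_alg:2} as subdifferential inclusions via the characterisation \eqref{eq:prox_subdiff_inclusion}, obtaining
\[
  K^* y_n + \tfrac{1}{\gamma_n}\p{x_n - x_{n+1}} - \nabla\varphi\p{x_n} \in \partial g\p{x_{n+1}}, \qquad K x_{n+1} + \tfrac{1}{\mu_n}\p{y_n - y_{n+1}} \in \partial h^*\p{y_{n+1}}.
\]
These are the natural candidates to converge, in the limit, to the stationarity inclusions \eqref{eq:stat_incl1} and \eqref{eq:stat_incl2}.

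Next I would prove the third assertion, from which the other two follow. Let $\p{\bar x, \bar y}$ be a cluster point of $\p{x_n, y_n}_{n\geq 0}$ and pick a subsequence $\p{x_{n_k}, y_{n_k}} \to \p{\bar x, \bar y}$. Because $\norm{x_{n_k} - x_{n_k+1}} \to 0$ and $\norm{y_{n_k} - y_{n_k+1}} \to 0$, the shifted subsequences satisfy $x_{n_k+1} \to \bar x$ and $y_{n_k+1} \to \bar y$ as well. In the first inclusion, the term $\tfrac{1}{\gamma_{n_k}}\p{x_{n_k} - x_{n_k+1}}$ vanishes since $\tfrac{1}{\gamma_n}$ is bounded by \eqref{eq:stepsize_conditions}, while $K^* y_{n_k} \to K^* \bar y$ and $\nabla\varphi\p{x_{n_k}} \to \nabla\varphi\p{\bar x}$ by continuity of $K^*$ and of the gradient. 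Hence the left-hand side converges to $K^* \bar y - \nabla\varphi\p{\bar x}$, and since $x_{n_k+1} \to \bar x$, the closedness of the graph of the maximal monotone operator $\partial g$ yields $K^* \bar y - \nabla\varphi\p{\bar x} \in \partial g\p{\bar x}$, that is, \eqref{eq:stat_incl1}. An identical argument applied to the second inclusion, using boundedness of $\tfrac{1}{\mu_n}$ and continuity of $K$, gives $K\bar x \in \partial h^*\p{\bar y}$, that is, \eqref{eq:stat_incl2}. Thus $\p{\bar x, \bar y} \in \crit\Phi$.

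For the first two assertions I would reduce to the third by a diagonal extraction. If $\bar x$ is a cluster point of $\p{x_n}_{n\geq 0}$, choose a subsequence $x_{n_k} \to \bar x$; the corresponding $\p{y_{n_k}}$ is bounded, so by passing to a further subsequence I may assume $y_{n_k} \to \bar y$ for some $\bar y$. Then $\p{\bar x, \bar y}$ is a cluster point of the joint sequence, hence a critical point of $\Phi$, and Remark \ref{rem:critical} shows that $\bar x \in \crit\p{g + \varphi - h \circ K}$. The second assertion is obtained symmetrically, first extracting a convergent subsequence of $\p{y_n}_{n\geq 0}$ and then a convergent subsequence of the associated $\p{x_n}$.

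The technical heart is the limit passage in the two inclusions, which rests on the fact that, for a proper, convex, lower semicontinuous function, the subdifferential is maximal monotone and therefore has a sequentially closed graph; in finite dimensions this means that $x_{n_k+1} \to \bar x$ together with convergence of the corresponding subgradients suffices to conclude membership in $\partial g\p{\bar x}$, with no separate control of the function values required. The only point demanding care is to track that it is the \emph{shifted} iterates $x_{n_k+1}$ and $y_{n_k+1}$ that serve as the base points of the subdifferentials, so one must invoke $\norm{x_n - x_{n+1}} \to 0$ and $\norm{y_n - y_{n+1}} \to 0$ to align them with the limits $\bar x$ and $\bar y$.
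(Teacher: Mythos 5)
Your proposal is correct and follows essentially the same route as the paper's proof: rewrite the proximal steps as subdifferential inclusions, use Proposition \ref{prop:FBCD_summability} together with the stepsize bounds to make the residual terms vanish, pass to the limit along a (doubly extracted) subsequence using continuity of $\nabla\varphi$, $K$, $K^*$ and the closed graphs of $\partial g$ and $\partial h^*$, and then invoke Remark \ref{rem:critical}. The only cosmetic difference is the order of the assertions (you prove the joint statement first and deduce the other two, while the paper argues them by choosing the subsequences in the corresponding order), and your explicit tracking of the shifted iterates $x_{n_k+1}, y_{n_k+1}$ is a point the paper leaves implicit.
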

\begin{proof}
  Let $\bar x$ be a cluster point of $\p{x_n}_{n\geq 0}$. Let $\p{x_{n_k}}_{k\geq 0}$ be a subsequence of $\p{x_n}_{n\geq 0}$ such that $x_{n_k} \to \bar x$. By another transition to a subsequence, we can guarantee $y_{n_k} \to \bar y$ for some $\bar y \in \Hh$, since $\p{y_{n_k}}_{k\geq 0}$ is bounded. By \eqref{eq:FBDC_alg:1} and \eqref{eq:FBDC_alg:2}, we obtain, for every $k\geq 0$,
  \begin{align*}
    \frac{x_{n_k} - x_{n_k+1}}{\gamma_{n_k}} + K^* y_{n_k} - \nabla \varphi\p{x_{n_k}} &\in \partial g\p{x_{n_k+1}} \\
    \text{and } \frac{y_{n_k} - y_{n_k + 1}}{\mu_{n_k}} + K x_{n_k+1} &\in \partial h^*\p{y_{n_k + 1}},
  \end{align*}
  respectively. By Proposition \ref{prop:FBCD_summability}, the first summands on the left-hand side of the above inclusions tend to zero as $k\to \infty$. Using the continuity of $\nabla \varphi$ and the closedness of the graphs of $\partial g$ and $\partial h^*$ and passing to the limit, 
  we get $K^* \bar y - \nabla \varphi\p{\bar x} \in \partial g\p{\bar x}$ and $K \bar x \in \partial h^*\p{\bar y}$, which means that $\p{\bar x, \bar y}$ is a critical point of $\Phi$. The first statement follows by considering Remark \ref{rem:critical}. For the second statement, 
  one has to choose $\bar x$ and $\bar y$ in reverse order, for the third one, they are chosen at the same time.
\end{proof}
\begin{remark}
  It is clear that one cannot expect the cluster points to be minima, since it is easy to see that $\p{\bar x, \bar y}$ is a fixed point of the iteration \eqref{eq:FBDC_alg:1}--\eqref{eq:FBDC_alg:2} if and only if \eqref{eq:stat_incl1} and \eqref{eq:stat_incl2} are satisfied, i.e., 
  if and only if $\p{\bar x, \bar y}$ is a critical point for $\Phi$ (independent of the choice of the parameters $\p{\gamma_n}_{n\geq 0}$ and $\p{\mu_n}_{n\geq 0}$).
\end{remark}
\begin{proposition}\label{prop:fixed_points}
  Let \eqref{eq:stepsize_conditions} be satisfied. For any $n\geq 0$, the following statements are equivalent:
  \begin{enumerate}
    \item $\p{x_n, y_n}$ is a critical point of $\Phi$;
    \item $\p{x_{n+1}, y_{n+1}} = \p{x_n, y_n}$;
    \item $\Phi\p{x_{n+1}, y_{n+1}} = \Phi\p{x_n, y_n}$.
  \end{enumerate}
\end{proposition}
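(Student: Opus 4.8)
The plan is to establish the cyclic chain of implications (1) $\Rightarrow$ (2) $\Rightarrow$ (3) $\Rightarrow$ (1), using the one-step descent estimates \eqref{eq:est_Phi_norm_x} and \eqref{eq:est_Phi_norm_y} together with the subdifferential characterisation \eqref{eq:prox_subdiff_inclusion} of the two proximal steps. Throughout, \eqref{eq:stepsize_conditions} guarantees both $0 < \gamma_n < 2\beta$ and $\mu_n > 0$, so that Proposition \ref{prop:FBCD_monotonicity} is available.

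For (1) $\Rightarrow$ (2), I would assume $\p{x_n, y_n} \in \crit \Phi$, so that $K^* y_n - \nabla \varphi\p{x_n} \in \partial g\p{x_n}$ and $K x_n \in \partial h^*\p{y_n}$ by \eqref{eq:stat_incl1}--\eqref{eq:stat_incl2}. Reading \eqref{eq:prox_subdiff_inclusion} in reverse, the first inclusion says exactly that $x_n = \Prox_{\gamma_n g}\p{x_n + \gamma_n K^* y_n - \gamma_n \nabla \varphi\p{x_n}}$, whence $x_{n+1} = x_n$ by \eqref{eq:FBDC_alg:1}. Substituting $x_{n+1} = x_n$ into \eqref{eq:FBDC_alg:2} and applying the same reasoning to $K x_n \in \partial h^*\p{y_n}$ gives $y_{n+1} = y_n$. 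The implication (2) $\Rightarrow$ (3) is immediate, as equal arguments produce equal values of $\Phi$.

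For (3) $\Rightarrow$ (1), I would first invoke Proposition \ref{prop:FBCD_monotonicity}: the hypothesis $\Phi\p{x_{n+1}, y_{n+1}} = \Phi\p{x_n, y_n}$ squeezes the chain $\Phi\p{x_{n+1}, y_{n+1}} \leq \Phi\p{x_{n+1}, y_n} \leq \Phi\p{x_n, y_n}$, forcing all three quantities to coincide. Inserting $\Phi\p{x_{n+1}, y_n} = \Phi\p{x_n, y_n}$ into \eqref{eq:est_Phi_norm_x} yields $0 \leq \p{\frac{1}{2\beta} - \frac{1}{\gamma_n}} \norm{x_n - x_{n+1}}^2$, and since $\gamma_n < 2\beta$ makes the coefficient strictly negative, necessarily $x_{n+1} = x_n$. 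Analogously, $\Phi\p{x_{n+1}, y_{n+1}} = \Phi\p{x_{n+1}, y_n}$ together with \eqref{eq:est_Phi_norm_y} and $\mu_n > 0$ forces $y_{n+1} = y_n$. Feeding these two equalities into the inclusions obtained from \eqref{eq:prox_subdiff_inclusion} applied to \eqref{eq:FBDC_alg:1}--\eqref{eq:FBDC_alg:2} (exactly as in the proof of Proposition \ref{prop:FBCD_clusterponts}) collapses them to \eqref{eq:stat_incl1} and \eqref{eq:stat_incl2}, so $\p{x_n, y_n} \in \crit \Phi$.

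The main obstacle is the implication $\Phi\p{x_{n+1}, y_n} = \Phi\p{x_n, y_n} \Rightarrow x_{n+1} = x_n$, which hinges on the strict bound $\sup_{n \geq 0} \gamma_n < 2\beta$ rather than merely $\gamma_n \leq 2\beta$: the borderline value $\gamma_n = 2\beta$ would annihilate the coefficient $\frac{1}{2\beta} - \frac{1}{\gamma_n}$ in \eqref{eq:est_Phi_norm_x} and leave $\norm{x_n - x_{n+1}}$ unconstrained, breaking the argument for the primal variable (the dual variable is unaffected, since its estimate \eqref{eq:est_Phi_norm_y} carries the strictly negative coefficient $-\frac{1}{\mu_n}$ for every admissible $\mu_n$).
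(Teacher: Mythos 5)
Your proof is correct and uses exactly the same ingredients as the paper's (very terse) proof: the characterisation \eqref{eq:prox_subdiff_inclusion} of proximal points to link criticality with the fixed-point property, and the descent estimates \eqref{eq:est_Phi_norm_x}--\eqref{eq:est_Phi_norm_y} to link the fixed-point property with equality of function values. The only difference is organisational --- you arrange the argument as the cycle (1) $\Rightarrow$ (2) $\Rightarrow$ (3) $\Rightarrow$ (1) rather than as the two equivalences (1) $\Leftrightarrow$ (2) and (2) $\Leftrightarrow$ (3), and you correctly note that the strict bound $\sup_{n\geq 0}\gamma_n < 2\beta$ from \eqref{eq:stepsize_conditions} is what makes the step (3) $\Rightarrow$ (2) work.
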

\begin{proof}
  It is easily seen by the formula \eqref{eq:prox_subdiff_inclusion} that the first two items are equivalent. The equivalence of the latter two items follows by \eqref{eq:est_Phi_norm_x} and \eqref{eq:est_Phi_norm_y}.
\end{proof}

Next, we summarise the convergence properties of the prox-prox algorithm. To this end, we denote by $\omega\p{x_0, y_0}$ the set of cluster points of the iteration generated by \eqref{eq:FBDC_alg:1} and \eqref{eq:FBDC_alg:2} with the initial points $x_0$ and $y_0$. See also \cite[Lemma 5]{BolteSabachTeboulle:2014} for an analogous result for a nonconvex forward-backward scheme.
\begin{lemma}\label{lem:FBDC_properties}
 Let $\Hh$ and $\Gg$ be two real finite-dimensional Hilbert spaces, let $g: \Hh \to \RRc$ and $h: \Gg \to \RRc$ be proper, convex and lower semicontinuous functions, let $\varphi: \Hh \to \RR$ be a convex, Fr\'echet differentiable function with a $\frac{1}{\beta}$-Lipschitz continuous gradient, for some $\beta >0$, 
 and let $K: \Hh \to \Gg$ be a linear mapping. Let the sequences $\p{\gamma_n}_{n\geq 0}$ and $\p{\mu_n}_{n\geq 0}$ satisfy \eqref{eq:stepsize_conditions}. Moreover, 
 assume that the sequence $\p{x_n, y_n}_{n \geq 0}$ generated by \eqref{eq:FBDC_alg:1} and \eqref{eq:FBDC_alg:2} is bounded. Then the following assertions hold:
  \begin{enumerate}
    \item $\emptyset \neq \omega\p{x_0, y_0} \subseteq \crit \Phi \subseteq \crit \p{g + \varphi - h \circ K} \times \crit\p{h^* - \p{g + \varphi}^* \circ K^*}$,
    \item $\lim_{n \to \infty} \dist\p{\p{x_n, y_n}, \omega\p{x_0, y_0}} = 0$,
    \item if the common optimal value of the problems \eqref{eq:DC_Problem}, \eqref{eq:DC_Toland_dual} and \eqref{eq:DC_pd} is $> -\infty$, then $\omega\p{x_0, y_0}$ is a nonempty, compact and connected set, and so are the sets of the limit points of the sequences $\p{x_n}_{n\geq 0}$ and $\p{y_n}_{n\geq 0}$,
    \item \label{item:lem:FBDC_properties_const} the objective function $\Phi$ is finite and constant on $\omega\p{x_0, y_0}$ provided that the optimal value is finite.
  \end{enumerate}
\end{lemma}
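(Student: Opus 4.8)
The plan is to read the four assertions off the monotonicity and summability already established in Propositions~\ref{prop:FBCD_monotonicity}--\ref{prop:FBCD_clusterponts}, combined with elementary facts about the limit set of a bounded sequence whose increments vanish. For assertion (1), I would first note that boundedness of $\p{x_n, y_n}_{n\geq 0}$ in the finite-dimensional space $\Hh\times\Gg$ gives, by Bolzano--Weierstrass, at least one convergent subsequence, so $\omega\p{x_0, y_0}\neq\emptyset$. For the inclusion $\omega\p{x_0, y_0}\subseteq\crit\Phi$ I would secure summability of the increments without invoking the global lower bound: since the iterates lie in a compact set and $\Phi$ is proper and lower semicontinuous, $\Phi$ is bounded below along the sequence, and by Proposition~\ref{prop:FBCD_monotonicity} the values $\Phi\p{x_n, y_n}$ are nonincreasing, hence convergent; the telescoping argument of Proposition~\ref{prop:FBCD_summability} then yields $\sum_n\norm{x_n - x_{n+1}}^2 < +\infty$ and $\sum_n\norm{y_n - y_{n+1}}^2 < +\infty$. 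With this in hand the argument of Proposition~\ref{prop:FBCD_clusterponts} applies verbatim: passing to the limit in the subdifferential inclusions coming from \eqref{eq:FBDC_alg:1}--\eqref{eq:FBDC_alg:2} along a convergent subsequence, and using continuity of $\nabla\varphi$ and closedness of the graphs of $\partial g$ and $\partial h^*$, shows that every cluster point is a critical point of $\Phi$. The remaining inclusion $\crit\Phi\subseteq\crit\p{g+\varphi-h\circ K}\times\crit\p{h^*-\p{g+\varphi}^*\circ K^*}$ is exactly the content of Remark~\ref{rem:critical}.

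\textbf{Assertions (2) and (3).} Assertion (2) is the standard fact that a bounded sequence approaches its set of cluster points: if $\dist\p{\p{x_n, y_n}, \omega\p{x_0, y_0}}$ did not tend to $0$, some subsequence would stay at distance $\geq\varepsilon$ from $\omega\p{x_0, y_0}$, yet by boundedness it would have a further convergent subsequence whose limit would be a cluster point at distance $\geq\varepsilon$ from $\omega\p{x_0, y_0}$, a contradiction. For assertion (3) I would use that, under the additional hypothesis that the optimal value is $>-\infty$, Proposition~\ref{prop:FBCD_summability} gives $\norm{x_n - x_{n+1}}\to 0$ and $\norm{y_n - y_{n+1}}\to 0$, so consecutive iterates have vanishing distance. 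The set $\omega\p{x_0, y_0}$ is closed (limit sets always are) and bounded (contained in the closure of the bounded orbit), hence compact. Connectedness is the usual argument: were $\omega\p{x_0, y_0}$ the disjoint union of two nonempty compact sets $A$ and $B$ with $\dist\p{A, B} > 0$, the orbit would have to pass from a neighbourhood of $A$ to a neighbourhood of $B$ infinitely often, which is impossible once the increments are smaller than the gap without producing cluster points strictly between $A$ and $B$. Finally, since the canonical projections onto $\Hh$ and $\Gg$ are continuous, and since a short argument (pass to a further subsequence in the other coordinate, using boundedness) identifies the cluster set of $\p{x_n}_{n\geq 0}$ with $\Proj_\Hh\omega\p{x_0, y_0}$ and that of $\p{y_n}_{n\geq 0}$ with $\Proj_\Gg\omega\p{x_0, y_0}$, these cluster sets are continuous images of a compact connected set, hence again compact and connected.

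\textbf{Assertion (4) --- the main obstacle.} Here I would fix a cluster point $\p{\bar x, \bar y}\in\omega\p{x_0, y_0}$ with $\p{x_{n_k}, y_{n_k}}\to\p{\bar x, \bar y}$; by the increment estimate of assertion (3), also $\p{x_{n_k+1}, y_{n_k+1}}\to\p{\bar x, \bar y}$. Since the optimal value is finite, the nonincreasing sequence $\Phi\p{x_n, y_n}$ is bounded below by it and therefore converges to some $\ell\in\RR$, and lower semicontinuity of $\Phi$ immediately gives $\Phi\p{\bar x, \bar y}\leq\ell$. The hard part is the reverse inequality $\Phi\p{\bar x, \bar y}\geq\ell$, and this is where the proximal structure must be exploited rather than mere closedness of the subdifferentials: writing out the defining minimality of $x_{n_k+1}=\Prox_{\gamma_{n_k} g}\p{\cdot}$ tested against $\bar x$, and likewise for $y_{n_k+1}=\Prox_{\mu_{n_k} h^*}\p{\cdot}$ tested against $\bar y$, yields bounds of the form $g\p{x_{n_k+1}}\leq g\p{\bar x} + \tfrac{1}{2\gamma_{n_k}}\p{\norm{\bar x - u_{n_k}}^2 - \norm{x_{n_k+1} - u_{n_k}}^2}$, where $u_{n_k}$ is the argument of the proximal operator in \eqref{eq:FBDC_alg:1}. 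Passing to a further subsequence along which $\gamma_{n_k}$ and $\mu_{n_k}$ converge (possible since they are bounded away from $0$ and $+\infty$), both squared-norm terms share the same limit, the bracket vanishes, and $\limsup_k g\p{x_{n_k+1}}\leq g\p{\bar x}$; together with lower semicontinuity of $g$ this forces $g\p{x_{n_k+1}}\to g\p{\bar x}$, and analogously $h^*\p{y_{n_k+1}}\to h^*\p{\bar y}$. Since $\varphi$ is continuous and the coupling $\inpr{y}{Kx}$ is jointly continuous, $\Phi\p{x_{n_k+1}, y_{n_k+1}}\to\Phi\p{\bar x, \bar y}$; but the left-hand side converges to $\ell$, whence $\Phi\p{\bar x, \bar y}=\ell$. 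As $\p{\bar x, \bar y}$ was arbitrary, $\Phi$ equals the finite constant $\ell$ on $\omega\p{x_0, y_0}$. I expect the delicate points to be the passage to a subsequence along which the step sizes converge and the bookkeeping showing that the two squared norms cancel in the limit; everything else reduces to continuity and to the monotonicity already proved.
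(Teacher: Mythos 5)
Your proof is correct, and for assertions (1)--(3) it follows the same skeleton as the paper: Bolzano--Weierstrass for nonemptiness, the closed-graph limiting argument of Proposition~\ref{prop:FBCD_clusterponts} for $\omega\p{x_0,y_0}\subseteq\crit\Phi$, Remark~\ref{rem:critical} for the last inclusion, the identical contradiction argument for (2), and compactness plus the vanishing-increment connectedness argument for (3) (which the paper delegates to Bolte--Sabach--Teboulle). Two points genuinely differ. First, in assertion (1) you obtain the lower bound on $\Phi$ along the orbit from properness and lower semicontinuity of $\Phi$ on the compact closure of the bounded orbit, instead of from the hypothesis $\inf\setcond{g\p{x}+\varphi\p{x}-h\p{Kx}}{x\in\Hh}>-\infty$. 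This is a real improvement: the paper's own proof of (1) cites Proposition~\ref{prop:FBCD_clusterponts}, whose hypotheses include exactly that lower bound, which the lemma does not assume for assertions (1)--(2); your observation closes that small gap and makes (1)--(2) self-contained as stated. Second, for assertion (4) the paper gets the reverse inequality $\ell\leq\Phi\p{\bar x,\bar y}$ in one line by evaluating the already-established estimate \eqref{eq:FBCD_pdbound} at $\p{x,y}=\p{\bar x,\bar y}$ and noting that its right-hand side vanishes along the subsequence, whereas you test the minimality defining $\Prox_{\gamma_{n_k}g}$ and $\Prox_{\mu_{n_k}h^*}$ against $\bar x$ and $\bar y$, forcing $g\p{x_{n_k+1}}\to g\p{\bar x}$ and $h^*\p{y_{n_k+1}}\to h^*\p{\bar y}$, hence $\Phi\p{x_{n_k+1},y_{n_k+1}}\to\Phi\p{\bar x,\bar y}$. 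Both routes exploit the same underlying proximal inequality; the paper's is shorter because \eqref{eq:FBCD_pdbound} is already on record, while yours (in the style of Bolte--Sabach--Teboulle) gives the strictly stronger conclusion that the nonsmooth summands themselves converge along the subsequence, i.e.\ $\Phi$-attentive convergence. One simplification: your extraction of a further subsequence along which $\gamma_{n_k}$ and $\mu_{n_k}$ converge is unnecessary, since with $u_{n_k}$ the prox argument one has $\norm{\bar x-u_{n_k}}^2-\norm{x_{n_k+1}-u_{n_k}}^2=\inpr{\bar x-x_{n_k+1}}{\bar x+x_{n_k+1}-2u_{n_k}}\to 0$ directly from $x_{n_k+1}\to\bar x$, boundedness of $u_{n_k}$, and $1/\gamma_{n_k}$ bounded.
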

\begin{proof}
  \begin{enumerate}
    \item It is clear that the set of cluster points of a bounded sequence is nonempty. That every cluster point is critical for $\Phi$, is the statement of Proposition \ref{prop:FBCD_clusterponts}. The last inclusion is discussed in Remark \ref{rem:critical}.
    \item Assume that the assertion does not hold. In this case, there exists an $\varepsilon > 0$ and a subsequence $\p{x_{n_k}, y_{n_k}}_{k\geq 0}$ of $\p{x_n, y_n}_{n\geq 0}$ with $\dist\p{\p{x_{n_k}, y_{n_k}}, \omega\p{x_0, y_0}} > \varepsilon$ for all $k\geq 0$. The subsequence is bounded, so it has a cluster point, which is a cluster point of the original sequence $\p{x_n, y_n}_{n\geq 0}$ as well, thus an element of $\omega\p{x_0, y_0}$. This contradicts the assumption $\dist\p{\p{x_{n_k}, y_{n_k}}, \omega\p{x_0, y_0}} > \varepsilon$ for all $k\geq 0$.
    \item Since the sequence $\p{x_n, y_n}_{n\geq 0}$ is bounded, the sets
      \[
        \Omega_k \defeq \cl\p{\bigcup_{n\geq k} \set{\p{x_n, y_n}}}
      \]
      are bounded and closed, hence compact for any $k\geq 0$. Their intersection $\bigcap_{n\geq 0} \Omega_n$, which equals the set of cluster points of $\p{x_n, y_n}_{n\geq 0}$, is therefore compact, too. The connectedness follows from Proposition \ref{prop:FBCD_summability}. See the proof of \cite[Lemma 5 (iii)]{BolteSabachTeboulle:2014} for the details.

    \item According to Proposition \ref{prop:FBCD_monotonicity}, the function values $\Phi\p{x_n, y_n}$ are monotonically decreasing, thus convergent, say $\Phi\p{x_n, y_n} \to \ell$. Let $\p{\bar x, \bar y}$ be an arbitrary limit point of the sequence $\p{x_n, y_n}_{n\geq 0}$, and let $\p{x_{n_k}, y_{n_k}}_{k\geq 0}$ be a subsequence converging to $\p{\bar x, \bar y}$ as $k\to \infty$. By lower semicontinuity, we have $\Phi\p{\bar x, \bar y} \leq \lim_{k\to \infty} \Phi\p{x_{n_k}, y_{n_k}} = \ell$. On the other hand, consider \eqref{eq:FBCD_pdbound} with $x = \bar x$ and $y = \bar y$. The right-hand side converges to $0$ as we let $n\to \infty$ along the subsequence $\p{n_k}_{k \geq 0}$, so $\ell = \lim_{n\to \infty} \Phi\p{x_n, y_n} \leq \Phi\p{\bar x, \bar y}$. \qedhere
  \end{enumerate}
\end{proof}
\begin{remark}
  To guarantee the boundedness of the iterates, one could assume that the objective function of the primal-dual minimization problem \eqref{eq:DC_pd} is coercive, i.e., the lower level sets are bounded.
\end{remark}

\section{Convergence under Kurdyka--\L ojasiewicz assumptions}\label{sec:KL}

In the next step, we shall assume the Kurdyka--\L ojasiewicz property for the functions involved. Let us recall the definition and some basic properties. By $\Theta_\eta$, for $\eta \in \ocint{0}{+\infty}$, we denote the set of all concave and continuous functions $\vartheta: \coint{0}{\eta} \to \RR$ with the following properties:
\begin{enumerate}
  \item $\vartheta\p{0} = 0$,
  \item $\vartheta$ is continuously differentiable on $\opint{0}{\eta}$ and continuous at $0$,
  \item $\vartheta'\p{s} > 0$ for all $s\in \opint{0}{\eta}$.
\end{enumerate}

\begin{definition}
  Let $\Hh$ be a real finite-dimensional Hilbert space, and let $\Phi: \Hh \to \RRc$ be a proper and lower semicontinuous function. We say that $\Phi$ satisfies the \emph{Kurdyka--\L ojasiewicz property} at $\bar x \in \dom \partial_L \Phi \defeq \setcond{x\in \Hh}{\partial_L \Phi\p{x} \neq \emptyset}$ if there exists some $\eta \in \ocint{0}{+\infty}$, a neighbourhood $U$ of $\bar x$ and a function $\vartheta \in \Theta_\eta$ such that for all
  \[
    x \in U \cap \setcond{x\in \Hh}{\Phi\p{\bar x} < \Phi\p{x} < \Phi\p{\bar x} + \eta}
  \]
  the following inequality holds
  \[
    \vartheta'\p{\Phi\p{x} - \Phi\p{\bar x}} \cdot \dist\p{0, \partial_L \Phi \p{x}} \geq 1.
  \]
  We call $\Phi$ a \emph{K\L{} function} if it satisfies the Kurdyka--\L ojasiewicz property at each point $\bar x \in \dom \partial_L \Phi$.
\end{definition}

The following uniform K\L{} property is according to \cite[Lemma 6]{BolteSabachTeboulle:2014}.
\begin{lemma}\label{lem:uniform_KL}
  Let $\Omega$ be a compact set, and let $\Phi: \Hh \to \RRc$ be a proper and lower semicontinuous function. Assume that $\Phi$ is constant on $\Omega$ and satisfies the K\L{} property at each point of $\Omega$. Then there exist $\varepsilon > 0$, $\eta > 0$ and $\vartheta \in \Theta_\eta$ such that for all $\bar u \in \Omega$ and all $u$ in the intersection
  \begin{equation}\label{eq:uniform_KL_intersection}
    \setcond{u \in \Hh}{\dist\p{u, \Omega} < \varepsilon} \cap \setcond{u\in \Hh}{\Phi\p{\bar u} < \Phi\p{u} < \Phi\p{\bar u} + \eta}
  \end{equation}
  one has
  \[
    \vartheta'\p{\Phi\p{u} - \Phi\p{\bar u}} \cdot \dist\p{0, \partial_L \Phi\p{u}} \geq 1.
  \]
\end{lemma}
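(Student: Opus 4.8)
The plan is to run a compactness argument over $\Omega$ and to merge the finitely many local desingularising functions into a single global one. Since $\Phi$ is constant on $\Omega$, write $\Phi \equiv \Phi^*$ on $\Omega$; the crucial consequence is that the ``slice'' $\setcond{u \in \Hh}{\Phi\p{\bar u} < \Phi\p{u} < \Phi\p{\bar u} + \eta}$ appearing in the K\L{} inequality does not depend on the reference point $\bar u \in \Omega$, as $\Phi\p{\bar u} = \Phi^*$ throughout.

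First I would invoke the pointwise K\L{} property at every $\bar u \in \Omega$: this yields, for each such $\bar u$, a radius $\eta_{\bar u} > 0$, an open neighbourhood $U_{\bar u}$ of $\bar u$ and a function $\vartheta_{\bar u} \in \Theta_{\eta_{\bar u}}$ for which the desingularising inequality holds on $U_{\bar u} \cap \setcond{u}{\Phi^* < \Phi\p{u} < \Phi^* + \eta_{\bar u}}$. The family $\set{U_{\bar u}}_{\bar u \in \Omega}$ is an open cover of the compact set $\Omega$, so I extract a finite subcover $U_{\bar u_1}, \dots, U_{\bar u_p}$ and set $\eta \defeq \min_{1 \leq i \leq p} \eta_{\bar u_i} > 0$.

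Next I would produce the uniform radius $\varepsilon$ and the global function $\vartheta$. For $\varepsilon$: the set $V \defeq \bigcup_{i=1}^p U_{\bar u_i}$ is open and contains $\Omega$, so its complement is closed and disjoint from the compact set $\Omega$; hence $\varepsilon \defeq \dist\p{\Omega, \Hh \setminus V} > 0$ and every $u$ with $\dist\p{u, \Omega} < \varepsilon$ lies in some $U_{\bar u_i}$. For $\vartheta$: I take the sum $\vartheta \defeq \sum_{i=1}^p \vartheta_{\bar u_i}$ on $\coint{0}{\eta}$, which is concave, continuous, vanishes at $0$, is continuously differentiable on $\opint{0}{\eta}$ and satisfies $\vartheta'\p{s} = \sum_i \vartheta_{\bar u_i}'\p{s} \geq \vartheta_{\bar u_i}'\p{s} > 0$ for each $i$, all summands being positive; thus $\vartheta \in \Theta_\eta$ and $\vartheta'$ dominates each $\vartheta_{\bar u_i}'$ pointwise.

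Finally I would verify the uniform inequality. Given $\bar u \in \Omega$ and $u$ in the intersection \eqref{eq:uniform_KL_intersection}, the condition $\dist\p{u, \Omega} < \varepsilon$ places $u$ in some $U_{\bar u_i}$, while $\Phi^* < \Phi\p{u} < \Phi^* + \eta \leq \Phi^* + \eta_{\bar u_i}$ places $u$ in the corresponding slice for $\bar u_i$. The local K\L{} inequality at $\bar u_i$ then gives $\vartheta_{\bar u_i}'\p{\Phi\p{u} - \Phi^*} \cdot \dist\p{0, \partial_L \Phi\p{u}} \geq 1$, and since $\vartheta'\p{\Phi\p{u} - \Phi^*} \geq \vartheta_{\bar u_i}'\p{\Phi\p{u} - \Phi^*}$ and the distance is nonnegative, the desired bound follows. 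The main obstacle is the bookkeeping in merging the $\vartheta_{\bar u_i}$: one must ensure the combined function stays in $\Theta_\eta$, that is, concave and defined on a single common domain $\coint{0}{\eta}$, while its derivative dominates each local derivative, which is exactly why the sum, rather than a maximum, is the convenient choice.
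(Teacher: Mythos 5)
Your proof is correct: the paper itself does not prove this lemma but simply cites \cite[Lemma 6]{BolteSabachTeboulle:2014}, and your argument --- finite subcover of $\Omega$, $\eta$ as the minimum of the local $\eta_{\bar u_i}$, $\varepsilon$ as the distance from $\Omega$ to the complement of the union, and $\vartheta$ as the sum of the local desingularising functions so that $\vartheta'$ dominates each $\vartheta_{\bar u_i}'$ --- is precisely the argument used there. The only cosmetic difference is that the cited proof works with balls of halved radii instead of your distance-to-complement choice of $\varepsilon$; both are equally valid.
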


In the K\L{} property, we need the distance of a subgradient from zero. In our algorithm, we have the following result.

\begin{lemma}\label{lem:subgradient_estimation}
  For each $n\geq 1$ with $\gamma_{n-1} < 2\beta$, there exist $\p{x^*_n, y^*_n} \in \Hh \times \Gg$ with $\p{x^*_n, y^*_n} \in \partial_L \Phi\p{x_n, y_n}$ and
  \begin{align*}
    \norm{x^*_n} &\leq \norm{K} \norm{y_{n-1} - y_n} + \frac{1}{\gamma_{n-1}} \norm{x_{n-1} - x_n}, \\
    \norm{y^*_n} &\leq \frac{1}{\mu_{n-1}} \norm{y_{n-1} - y_n}.
  \end{align*}
\end{lemma}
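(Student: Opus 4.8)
The plan is to read off a single limiting subgradient of $\Phi$ at $\p{x_n, y_n}$ directly from the two proximal steps and then estimate its norm. The starting point is the structure of $\Phi$: writing $F\p{x, y} \defeq g\p{x} + h^*\p{y}$ and $\psi\p{x, y} \defeq \varphi\p{x} - \inpr{y}{Kx}$, we have $\Phi = F + \psi$, where $F$ is proper, convex, lower semicontinuous and separable in the two blocks, while $\psi$ is continuously differentiable with $\nabla \psi\p{x, y} = \p{\nabla \varphi\p{x} - K^* y, -Kx}$. Since $F$ is convex and separable, its limiting subdifferential coincides with the convex one and splits as a product, $\partial_L F\p{x, y} = \partial g\p{x} \times \partial h^*\p{y}$; and since $\psi$ is of class $C^1$, the sum rule for the limiting subdifferential (which is exact when one summand is continuously differentiable) yields
\[
  \partial_L \Phi\p{x, y} = \p{\partial g\p{x} + \nabla \varphi\p{x} - K^* y} \times \p{\partial h^*\p{y} - Kx}.
\]

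First I would extract one subgradient from each update. Applying the prox characterisation \eqref{eq:prox_subdiff_inclusion} to \eqref{eq:FBDC_alg:1} and \eqref{eq:FBDC_alg:2} with index $n - 1$ gives
\[
  \frac{x_{n-1} - x_n}{\gamma_{n-1}} + K^* y_{n-1} - \nabla \varphi\p{x_{n-1}} \in \partial g\p{x_n}, \qquad \frac{y_{n-1} - y_n}{\mu_{n-1}} + K x_n \in \partial h^*\p{y_n}.
\]
Guided by the description of $\partial_L \Phi\p{x_n, y_n}$ above, I would then set
\[
  y^*_n \defeq \frac{y_{n-1} - y_n}{\mu_{n-1}}, \qquad x^*_n \defeq \frac{x_{n-1} - x_n}{\gamma_{n-1}} + K^*\p{y_{n-1} - y_n} + \nabla \varphi\p{x_n} - \nabla \varphi\p{x_{n-1}}.
\]
By construction $y^*_n \in \partial h^*\p{y_n} - K x_n$ and $x^*_n \in \partial g\p{x_n} + \nabla \varphi\p{x_n} - K^* y_n$, so that $\p{x^*_n, y^*_n} \in \partial_L \Phi\p{x_n, y_n}$, as required.

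It then remains to estimate the two norms. The bound on $\norm{y^*_n}$ is immediate from its definition. For $\norm{x^*_n}$ I would use the triangle inequality together with $\norm{K^* \p{y_{n-1} - y_n}} \leq \norm{K} \norm{y_{n-1} - y_n}$ and the $\frac{1}{\beta}$-Lipschitz continuity of $\nabla \varphi$, which controls $\norm{\nabla \varphi\p{x_n} - \nabla \varphi\p{x_{n-1}}}$ by $\frac{1}{\beta} \norm{x_{n-1} - x_n}$.

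The main obstacle is precisely this gradient-difference term. Because the forward step produces $x_n$ from $\nabla \varphi\p{x_{n-1}}$, whereas the $x$-block of $\partial_L \Phi\p{x_n, y_n}$ demands $\nabla \varphi\p{x_n}$, the correction $\nabla \varphi\p{x_n} - \nabla \varphi\p{x_{n-1}}$ is forced into $x^*_n$, and the only element of $\partial g\p{x_n}$ available to us is the one delivered by the prox. With the Lipschitz estimate the natural bound I obtain is
\[
  \norm{x^*_n} \leq \norm{K} \norm{y_{n-1} - y_n} + \p{\frac{1}{\gamma_{n-1}} + \frac{1}{\beta}} \norm{x_{n-1} - x_n}.
\]
Reconciling this with the stated inequality requires either folding the $\frac{1}{\beta}$-contribution into the coefficient of $\norm{x_{n-1} - x_n}$ or a sharper argument removing it, and this is the step whose intended coefficient I would double-check. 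The remaining hypotheses play only a supporting role: $\gamma_{n-1} < 2\beta$ merely ensures we are in the regime of the standing step-size conditions, and the estimate itself is purely pointwise, needing neither boundedness nor summability of the iterates.
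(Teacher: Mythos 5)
Your setup coincides with the paper's: the same product formula for $\partial_L \Phi\p{x_n, y_n}$ (the paper derives it via \cite[Proposition 8.12, Exercise 8.8]{RockafellarWets:1998} rather than a convex-plus-smooth sum rule, which is immaterial), the same candidate $\p{x_n^*, y_n^*}$ read off from the two proximal steps, and a complete bound on $\norm{y_n^*}$. The genuine gap is the estimate of $\norm{x_n^*}$: splitting off $\nabla\varphi\p{x_n} - \nabla\varphi\p{x_{n-1}}$ and invoking Lipschitz continuity can only ever give the coefficient $\frac{1}{\gamma_{n-1}} + \frac{1}{\beta}$, and no rearrangement of that argument recovers the stated coefficient $\frac{1}{\gamma_{n-1}}$. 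The missing idea is to keep the terms together and recognise the forward operator:
\[
  \frac{x_{n-1} - x_n}{\gamma_{n-1}} + \nabla\varphi\p{x_n} - \nabla\varphi\p{x_{n-1}}
  = \frac{1}{\gamma_{n-1}}\p{\p{\Id - \gamma_{n-1}\nabla\varphi}\p{x_{n-1}} - \p{\Id - \gamma_{n-1}\nabla\varphi}\p{x_n}},
\]
so that
\[
  \norm{x_n^*} \leq \norm{K}\norm{y_{n-1} - y_n} + \frac{1}{\gamma_{n-1}}\norm{\p{\Id - \gamma_{n-1}\nabla\varphi}\p{x_{n-1}} - \p{\Id - \gamma_{n-1}\nabla\varphi}\p{x_n}},
\]
and then to show that $\Id - \gamma_{n-1}\nabla\varphi$ is nonexpansive. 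This is where the convexity of $\varphi$ --- which your argument never uses --- and the hypothesis $\gamma_{n-1} < 2\beta$ enter: by the Baillon--Haddad theorem \cite[Corollary 18.16]{BauschkeCombettes:2011}, the gradient of a convex function with $\frac{1}{\beta}$-Lipschitz gradient is $\beta$-cocoercive, i.e. $\inpr{\nabla\varphi\p{x} - \nabla\varphi\p{y}}{x - y} \geq \beta\norm{\nabla\varphi\p{x} - \nabla\varphi\p{y}}^2$, whence for $0 < \gamma \leq 2\beta$
\[
  \norm{\p{\Id - \gamma\nabla\varphi}\p{x} - \p{\Id - \gamma\nabla\varphi}\p{y}}^2 \leq \norm{x - y}^2 - \gamma\p{2\beta - \gamma}\norm{\nabla\varphi\p{x} - \nabla\varphi\p{y}}^2 \leq \norm{x - y}^2,
\]
which is \cite[Proposition 4.33]{BauschkeCombettes:2011}, the reference the paper invokes. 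In particular, $\gamma_{n-1} < 2\beta$ is not, as you suggest, a cosmetic reminder of the standing step-size regime; it is exactly the condition that makes the coefficient $\frac{1}{\gamma_{n-1}}$ correct, and pure Lipschitz continuity without convexity would not suffice.

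To your credit, you flagged the discrepancy instead of papering over it, and your weaker inequality would in fact be serviceable for the later K\L{} analysis (it would only enlarge the constant $C_0$ in Theorem \ref{th1}); but it does not prove the lemma as stated, and the cocoercivity argument above is the step you are missing.
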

\begin{proof}
  From the definition of the algorithm, we have, for each $n \geq 1$,
  \begin{align*}
    \frac{x_{n-1} - x_n}{\gamma_{n-1}} + K^* y_{n-1} - \nabla \varphi\p{x_{n-1}} &\in \partial g\p{x_n}, \\
    \frac{y_{n-1} - y_n}{\mu_{n-1}} + K x_n &\in \partial h^*\p{y_n}.
  \end{align*}
Consider the function $\widetilde \Phi: \Hh \times \Gg \to \RRc, \widetilde \Phi\p{x, y} \defeq g\p{x} + \varphi\p{x} + h^*\p{y}$. By the usual calculus of the convex subdifferential and \cite[Proposition 8.12]{RockafellarWets:1998}, for each $n \geq 1$
  \[
    \partial_L \widetilde \Phi\p{x_n, y_n} = \p{\partial g\p{x_n} + \nabla \varphi\p{x_n}} \times \partial h^*\p{y_n}.
  \]
  By \cite[Exercise 8.8]{RockafellarWets:1998}, we have for each $n \geq 1$
  \begin{align}\label{eq:Phi_inclusion}
    \partial_L \Phi\p{x_n, y_n} &= \partial_L \widetilde\Phi\p{x_n, y_n} - \p{K^* y_n, Kx_n} \nonumber \\
    &= \p{\partial g\p{x_n} + \nabla \varphi\p{x_n} - K^* y_n} \times \p{\partial h^*\p{y_n} - Kx_n},
  \end{align}
  thus,
  \begin{align*}
    \begin{pmatrix}
      x_n^* \\
      y_n^*
    \end{pmatrix} \defeq
    \begin{pmatrix}
      \frac{x_{n-1} - x_n}{\gamma_{n-1}} + \nabla \varphi\p{x_n} - \nabla \varphi\p{x_{n-1}} + K^*\p{y_{n-1} - y_n} \\
      \frac{y_{n-1} - y_n}{\mu_{n-1}}
    \end{pmatrix}\in \partial_L \Phi\p{x_n, y_n}.
  \end{align*}
  Now, we estimate for each $n \geq 1$
  \begin{align*}
    \norm{x_n^*} &\leq \norm{K} \norm{y_{n-1} - y_n} + \frac{1}{\gamma_{n-1}}\norm{\p{\Id - \gamma_{n-1} \nabla \varphi}\p{x_{n-1}} - \p{\Id - \gamma_{n-1} \nabla \varphi}\p{x_n}}.
  \end{align*}
  By the Baillon--Haddad theorem \cite[Corollary 18.16]{BauschkeCombettes:2011}, $\nabla \varphi$ is $\beta$-cocoercive. By \cite[Proposition 4.33]{BauschkeCombettes:2011}, $\Id - \gamma_{n-1} \nabla \varphi$ is nonexpansive for $\gamma_{n-1} < 2\beta$, which leads to the desired conclusion.
\end{proof}

\subsection{The case when $\Phi$ is a K\L{} function}
\begin{theorem}\label{th1}
  Let
  \begin{align*}
    0 < \underline \gamma \defeq \inf_{n\geq 0} \gamma_n &\leq \overline \gamma \defeq \sup_{n\geq 0} \gamma_n < \beta, \\
    0 < \underline \mu \defeq \inf_{n\geq 0} \mu_n &\leq \overline \mu \defeq \sup_{n\geq 0} \mu_n < +\infty.
  \end{align*}
  Suppose that $\Phi$ is in addition a K\L{} function bounded from below. Then $\p{x_n, y_n}_{n\geq 0}$ is a Cauchy sequence, thus convergent to a critical point of $\Phi$.
\end{theorem}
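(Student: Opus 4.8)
The plan is to run the by-now-standard Kurdyka--\L ojasiewicz finite-length argument in the style of Attouch--Bolte--Svaiter and Bolte--Sabach--Teboulle, applied to the single proper, lower semicontinuous function $\Phi$ on the product space $\Hh \times \Gg$, and to establish directly that $\sum_{n\geq 0} \norm{\p{x_{n+1}, y_{n+1}} - \p{x_n, y_n}} < +\infty$. Finiteness of this sum makes $\p{x_n, y_n}_{n\geq 0}$ a Cauchy sequence in a finite-dimensional Hilbert space, hence convergent; its limit is in particular a cluster point, and by Proposition \ref{prop:FBCD_clusterponts} every cluster point is a critical point of $\Phi$, which finishes the proof. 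Throughout I work under the boundedness of the iterates as in Lemma \ref{lem:FBDC_properties}, so that the cluster set $\omega\p{x_0, y_0}$ is nonempty and compact and $\Phi$ is finite and constant on it (Lemma \ref{lem:FBDC_properties}, items 1, 3 and 4); call that common value $\ell$, and note that $\Phi\p{x_n, y_n} \downarrow \ell$ by Proposition \ref{prop:FBCD_monotonicity} together with the lower bound.

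First I would assemble the two quantitative ingredients. The sufficient-decrease estimate comes from adding \eqref{eq:est_Phi_norm_x} and \eqref{eq:est_Phi_norm_y}: since $\overline\gamma < \beta < 2\beta$ and $\overline\mu < +\infty$, the coefficients $\tfrac{1}{\gamma_n} - \tfrac{1}{2\beta}$ and $\tfrac{1}{\mu_n}$ are bounded below by a common $a > 0$, so
\[
  \Phi\p{x_n, y_n} - \Phi\p{x_{n+1}, y_{n+1}} \geq a\p{\norm{x_n - x_{n+1}}^2 + \norm{y_n - y_{n+1}}^2}.
\]
The relative-error (subgradient) estimate is exactly Lemma \ref{lem:subgradient_estimation}: for $n \geq 1$ there is $\p{x_n^*, y_n^*} \in \partial_L \Phi\p{x_n, y_n}$ with $\norm{\p{x_n^*, y_n^*}} \leq b\p{\norm{x_{n-1} - x_n} + \norm{y_{n-1} - y_n}}$ for a constant $b$ depending on $\norm{K}$, $\underline\gamma$ and $\underline\mu$, so that $\dist\p{0, \partial_L \Phi\p{x_n, y_n}}$ is controlled by the previous increment. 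The step-size restriction is what makes both constants usable: $\overline\gamma < \beta$ forces $a > 0$, while $\gamma_{n-1} < 2\beta$ is what Lemma \ref{lem:subgradient_estimation} needs for the nonexpansiveness of $\Id - \gamma_{n-1}\nabla\varphi$ underlying $b$.

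Next I would feed these into the uniform KL inequality. Because $\Phi$ is a K\L{} function that is constant on the compact set $\omega\p{x_0, y_0}$, Lemma \ref{lem:uniform_KL} yields $\varepsilon, \eta > 0$ and $\vartheta \in \Theta_\eta$ valid uniformly on the intersection \eqref{eq:uniform_KL_intersection}. Since the iterates approach $\omega\p{x_0, y_0}$ (Lemma \ref{lem:FBDC_properties}, item 2) and $\Phi\p{x_n, y_n} \downarrow \ell$, for all large $n$ the pair $\p{x_n, y_n}$ lies in that set; the degenerate case $\Phi\p{x_n, y_n} = \ell$ for some $n$ is disposed of by Proposition \ref{prop:fixed_points}, which then forces the sequence to be stationary and leaves nothing to prove. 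Writing $\delta_n \defeq \vartheta\p{\Phi\p{x_n, y_n} - \ell}$, the KL inequality combined with the subgradient bound gives a lower bound on $\vartheta'\p{\Phi\p{x_n, y_n} - \ell}$ of order $1/\p{\norm{x_{n-1}-x_n} + \norm{y_{n-1}-y_n}}$; pairing this with concavity of $\vartheta$ and the sufficient-decrease estimate produces a recursion of the form
\[
  \norm{x_n - x_{n+1}}^2 + \norm{y_n - y_{n+1}}^2 \leq \tfrac{b}{a}\p{\norm{x_{n-1} - x_n} + \norm{y_{n-1} - y_n}}\p{\delta_n - \delta_{n+1}}.
\]
Taking square roots and using $\sqrt{uv} \leq \tfrac12\p{u+v}$ converts this into $\norm{x_n - x_{n+1}} + \norm{y_n - y_{n+1}} \leq \tfrac12\p{\norm{x_{n-1} - x_n} + \norm{y_{n-1} - y_n}} + C\p{\delta_n - \delta_{n+1}}$; summing over $n$, absorbing the half-weighted copy of the left-hand side, and telescoping $\delta_n$ (which is monotone and bounded since $\vartheta \geq 0$) bounds $\sum_n\p{\norm{x_n-x_{n+1}} + \norm{y_n-y_{n+1}}}$, i.e.\ the finite length.

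The main obstacle is the bookkeeping that confines the iterates to the uniform KL region for all large $n$: one must check, using items 2 and 4 of Lemma \ref{lem:FBDC_properties}, that $\dist\p{\p{x_n, y_n}, \omega\p{x_0, y_0}} < \varepsilon$ and $\ell < \Phi\p{x_n, y_n} < \ell + \eta$ hold simultaneously from some index onward, and to treat cleanly the finitely many steps (or the stationary case) where they fail. Once an index $N$ is fixed beyond which the recursion is valid, the remaining work — passing from the two coordinatewise estimates to a single summable recursion in $\norm{\p{x_n,y_n} - \p{x_{n+1},y_{n+1}}}$ and summing from $N$ to $\infty$ — is routine given the constants $a$ and $b$. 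With $\sum_n \norm{\p{x_n,y_n}-\p{x_{n+1},y_{n+1}}} < +\infty$ the sequence is Cauchy and convergent, and Proposition \ref{prop:FBCD_clusterponts} identifies the limit as a critical point of $\Phi$.
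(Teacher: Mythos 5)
Your proposal is correct and follows essentially the same route as the paper's proof: the uniform K\L{} property on the compact cluster set (Lemma \ref{lem:uniform_KL}), the sufficient-decrease estimates \eqref{eq:est_Phi_norm_x}--\eqref{eq:est_Phi_norm_y}, the subgradient bound of Lemma \ref{lem:subgradient_estimation}, concavity of $\vartheta$, and an arithmetic-geometric mean step producing a summable recursion for the increments, with the degenerate case $\Phi\p{x_n, y_n} = \ell$ handled separately exactly as in the paper. The only (cosmetic) difference is how the recursion is closed: you absorb a $\tfrac{1}{2}$-weighted copy of the left-hand side after summing, whereas the paper iterates the recursion $\delta_n \leq r C_0 \delta_{n-1} + \tfrac{1}{r}\p{\varepsilon_n - \varepsilon_{n+1}}$ with a free parameter $0 < r < 1/C_0$ and bounds the resulting double sum by a geometric series.
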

\begin{proof}
  Let $\Omega \defeq \omega\p{x_0, y_0}$, and let $\ell \in \RR$ be the value of $\Phi$ on $\Omega$ (see item \ref{item:lem:FBDC_properties_const} of Lemma \ref{lem:FBDC_properties}). If $\Phi\p{x_n, y_n} = \ell$ for some $n \geq 0$, then, by \eqref{eq:est_Phi_norm_x} and \eqref{eq:est_Phi_norm_y}, $x_{n+1} = x_n$ and $y_{n+1} = y_n$, and the assertion holds. Therefore, we assume $\Phi\p{x_n, y_n} > \ell$ for all $n\geq 0$.
  
  Let $\varepsilon > 0$, $\eta > 0$ and $\vartheta \in \Theta_\eta$ be as provided by Lemma \ref{lem:uniform_KL}.
  Since $\Phi\p{x_n, y_n} \to \ell$ as $n\to +\infty$, we find $n_1 \geq 0$ with $\Phi\p{x_n, y_n} < \ell + \eta$ for $n\geq n_1$. Since $\dist\p{\p{x_n, y_n}, \Omega} \to 0$ as $n\to +\infty$, we find $n_2 \geq 0$ with $\dist\p{\p{x_n, y_n}, \Omega} < \varepsilon$ for $n\geq n_2$. 
  
  In the following, fix an arbitrary $n\geq n_0 \defeq \max\set{n_1, n_2, 1}$. Then $\p{x_n, y_n}$ is an element of the intersection \eqref{eq:uniform_KL_intersection}. Consequently,
  \begin{equation}\label{eq:KL_derivatives}
    \vartheta'\p{\Phi\p{x_n, y_n} - \Phi\p{\bar x, \bar y}} \cdot \dist\p{\p{0, 0}, \partial_L \Phi\p{x_n, y_n}} \geq 1.
  \end{equation}
  By the concavity of $\vartheta$, we get, for all $s \in \p{0, \eta}$,
  \[
    \vartheta\p{s} - \vartheta\p{\Phi\p{x_n, y_n} - \Phi\p{\bar x, \bar y}} \leq \vartheta'\p{\Phi\p{x_n, y_n} - \Phi\p{\bar x, \bar y}} \cdot \p{s - \Phi\p{x_n, y_n} + \Phi\p{\bar x, \bar y}},
  \]
  so, setting in particular $s := \Phi\p{x_{n+1}, y_{n+1}} - \Phi\p{\bar x, \bar y} \in \opint{0}{\eta}$,
  \begin{align*}
    &\norel \p{\vartheta\p{\Phi\p{x_n, y_n} - \Phi\p{\bar x, \bar y}} - \vartheta\p{\Phi\p{x_{n+1}, y_{n+1}} - \Phi\p{\bar x, \bar y}}} \cdot \norm{\p{x_n^*, y_n^*}} \\
     &\geq \vartheta'\p{\Phi\p{x_n, y_n} - \Phi\p{\bar x, \bar y}} \cdot \p{\Phi\p{x_n, y_n} - \Phi\p{x_{n+1}, y_{n+1}}} \cdot \norm{\p{x_n^*, y_n^*}} \\
     &\geq \vartheta'\p{\Phi\p{x_n, y_n} - \Phi\p{\bar x, \bar y}} \cdot \p{\Phi\p{x_n, y_n} - \Phi\p{x_{n+1}, y_{n+1}}} \cdot \dist\p{\p{0, 0}, \partial_L \Phi\p{x_n, y_n}} \\
     &\geq \p{\Phi\p{x_n, y_n} - \Phi\p{x_{n+1}, y_{n+1}}}.
  \end{align*}
  Moreover, by \eqref{eq:est_Phi_norm_x} and \eqref{eq:est_Phi_norm_y},
  \[
    \Phi\p{x_n, y_n} - \Phi\p{x_{n+1}, y_{n+1}} \geq \p{\frac{1}{\gamma_n} - \frac{1}{2\beta}} \norm{x_n - x_{n+1}}^2 + \frac{1}{\mu_n} \norm{y_n - y_{n+1}}^2.
  \]
  Let us define the following shorthands
  \begin{align*}
    \delta_n &\defeq \sqrt{\p{\frac{1}{\gamma_n} - \frac{1}{2\beta}} \norm{x_n - x_{n+1}}^2 + \frac{1}{\mu_n} \norm{y_n - y_{n+1}}^2}, \\
    \varepsilon_n &\defeq \vartheta\p{\Phi\p{x_n, y_n} - \Phi\p{\bar x, \bar y}}
  \end{align*}
  for $n\geq n_0$ to obtain the inequality
  \[
    \p{\varepsilon_n - \varepsilon_{n+1}} \cdot \norm{\p{x_n^*, y_n^*}} \geq \delta_n^2.
  \]
  By the arithmetic-geometric inequality, for any $r > 0$ and $n\geq n_0$
  \begin{align}
    \delta_n &\leq \sqrt{\p{r\norm{\p{x_n^*, y_n^*}}} \cdot \p{\frac{1}{r}\p{\varepsilon_n - \varepsilon_{n+1}}}} \nonumber \\
    & \leq \frac{1}{2} \p{r\norm{\p{x_n^*, y_n^*}} + \frac{1}{r} \p{\varepsilon_n - \varepsilon_{n+1}}} \nonumber \\
& \leq r\norm{\p{x_n^*, y_n^*}} + \frac{1}{r} \p{\varepsilon_n - \varepsilon_{n+1}} \label{eq:estimation_delta_norm}
  \end{align}
  (recall that, by Proposition \ref{prop:FBCD_monotonicity} and the properties of $\vartheta$, the sequence $\p{\varepsilon_n}_{n\geq n_0}$ is decreasing, so $\varepsilon_n - \varepsilon_{n+1} \geq 0$). On the other hand, by Lemma \ref{lem:subgradient_estimation} and the inequality $2ab \leq a^2 + b^2$ ($a, b \geq 0$), for any $n\geq n_0$
  \begin{align}
    \norm{\p{x_n^*, y_n^*}}^2 \leq & \p{\norm{K}^2 + \frac{1}{\mu_{n-1}^2}} \norm{y_{n-1} - y_n}^2 + \frac{1}{\gamma_{n-1}^2} \norm{x_{n-1} - x_n}^2 +\nonumber \\
& + \frac{2\norm{K}}{\gamma_{n-1}} \norm{x_{n-1} - x_n}\norm{y_{n-1} - y_n} \nonumber \\
\leq & \p{2\norm{K}^2 + \frac{1}{\mu_{n-1}^2}} \norm{y_{n-1} - y_n}^2 + \frac{2}{\gamma_{n-1}^2} \norm{x_{n-1} - x_n}^2 \nonumber \\
 \leq & C^2_n \delta^2_{n-1}, \label{eq:estimation_norm_delta}
  \end{align}
  with
\begin{align*}
 C_n \defeq & \sqrt{\max\set{\frac{\frac{2}{\gamma_{n-1}^2}}{\frac{1}{\gamma_{n-1}} - \frac{1}{2\beta}}, \frac{2\norm{K}^2 + \frac{1}{\mu_{n-1}^2}}{\frac{1}{\mu_{n-1}}}}}\\
= & \sqrt{\max\set{\frac{4\beta}{\gamma_{n-1}\p{2\beta - \gamma_{n-1}}}, \frac{1 + 2\norm{K}^2 \mu_{n-1}^2}{\mu_{n-1}}}}.
  \end{align*}
For all $n\geq n_0$,
  \[
    C_n \leq C_0 \defeq \sqrt{\max\set{\frac{4\beta}{\underline \gamma\p{2\beta - \overline \gamma}}, \frac{1 + 2\norm{K}^2 \overline \mu^2}{\underline \mu}}}.
  \]
  Combined with \eqref{eq:estimation_delta_norm}, we obtain
  \begin{equation}\label{eq:estimation_delta}
    \delta_n \leq rC_0 \delta_{n-1} + \frac{1}{r} \p{\varepsilon_n - \varepsilon_{n+1}}.
  \end{equation}
  For any $k \geq n_0+1$, we have, by iteration,
  \[
    \delta_k \leq \p{rC_0}^{k - n_0} \delta_{n_0} + \sum_{n = 0}^{k - n_0 - 1} \frac{\p{rC_0}^n}{r} \p{\varepsilon_{k - n} - \varepsilon_{k - n + 1}},
  \]
  therefore, for any $N \geq n_0+1$ and $0 < r < \frac{1}{C_0}$,
  \begin{align*}
    \sum_{k = n_0+1}^N \delta_k &\leq \sum_{k = n_0+1}^N \p{\p{rC_0}^{k - n_0} \delta_{n_0} + \sum_{n = 0}^{k - n_0 - 1} \frac{\p{rC_0}^n}{r} \p{\varepsilon_{k - n} - \varepsilon_{k - n + 1}}} \\
    &= \sum_{k = 0}^{N - n_0-1} \p{rC_0}^{k+1} \delta_{n_0} + \sum_{k = 0}^{N - n_0-1} \sum_{n = 0}^{k} \frac{\p{rC_0}^n}{r} \p{\varepsilon_{k + n_0 - n+1} - \varepsilon_{k + n_0 - n + 2}} \\
    &\leq \frac{rC_0\delta_{n_0}}{1 - rC_0} + \sum_{n = 0}^{N - n_0 - 1} \frac{\p{rC_0}^n}{r} \sum_{k = n}^{N - n_0-1} \p{\varepsilon_{k + n_0 - n+1} - \varepsilon_{k + n_0 - n + 2}} \\
    &\leq \frac{rC_0\delta_{n_0}}{1 - rC_0} + \sum_{n = 0}^{N - n_0 - 1} \frac{\p{rC_0}^n}{r} \varepsilon_{n_0 + 1} \\
    &\leq \frac{rC_0\delta_{n_0}}{1 - rC_0} + \frac{\varepsilon_{n_0 + 1}}{r\p{1 - rC_0}}. 
  \end{align*}
  The right-hand side does not depend on $N$, thus, we conclude that $\sum_{k = n_0+1}^\infty \delta_k$ is finite, and so are $\sum_{k = n_0+1}^\infty \norm{x_n - x_{n+1}}$ and $\sum_{k = n_0+1}^{\infty} \norm{y_n - y_{n+1}}$.
\end{proof}

\subsection{Convergence rates}

\begin{lemma}
  Assume that $\Phi$ is a K\L{} function with $\vartheta\p{t} = Mt^{1 - \theta}$ for some $M > 0$ and $0 \leq \theta < 1$. Let $\bar x$ and $\bar y$ the limit points of the sequences $\p{x_n}_{n\geq 0}$ and $\p{y_n}_{n\geq 0}$, respectively (which exist due to Theorem \ref{th1}). Then the following convergence rates are guaranteed:
  \begin{enumerate}
    \item if $\theta = 0$, then there exists $n_0 \geq 0$, such that $x_n = x_{n_0}$ and $y_n = y_{n_0}$ for $n\geq n_0$;
    \item if $0 < \theta \leq \frac{1}{2}$, then there exist $c > 0$ and $0 \leq q < 1$ such that
      \[
        \norm{x_n - \bar x} \leq cq^n \qquad \text{and}\qquad \norm{y_n - \bar y} \leq cq^n
      \]
      for all $n \geq 0$;
    \item if $\frac{1}{2} < \theta < 1$, then there exists $c > 0$ such that
      \[
        \norm{x_n - \bar x} \leq c n^{-\frac{1-\theta}{2\theta - 1}} \qquad \text{and}\qquad \norm{y_n - \bar y} \leq c n^{-\frac{1 - \theta}{2\theta - 1}}
      \]
      for all $n \geq 0$.
  \end{enumerate}
\end{lemma}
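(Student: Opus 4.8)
The plan is to reduce everything to a one-dimensional recursion for the function-value gap and then transfer the resulting rate to the iterates. Write $r_n \defeq \Phi\p{x_n, y_n} - \ell$, where $\ell$ is the common limiting value identified in the proof of Theorem \ref{th1}, and note that $r_n \geq 0$ and $r_n \searrow 0$. If $r_n = 0$ for some $n$, then Proposition \ref{prop:fixed_points} forces the iteration to be stationary from that index on, which already settles case (1) and makes cases (2)--(3) trivial; hence I may assume $r_n > 0$ for all $n$. The first step is to combine three ingredients available from the earlier results, reusing the shorthands $\delta_n$ and $\varepsilon_n$ of the proof of Theorem \ref{th1}: the descent estimate $r_{n-1} - r_n \geq \delta_{n-1}^2$ coming from \eqref{eq:est_Phi_norm_x} and \eqref{eq:est_Phi_norm_y}; the subgradient bound $\dist\p{\p{0,0}, \partial_L \Phi\p{x_n, y_n}} \leq \norm{\p{x_n^*, y_n^*}} \leq C_0 \delta_{n-1}$ from Lemma \ref{lem:subgradient_estimation} and \eqref{eq:estimation_norm_delta}; and the K\L{} inequality with $\vartheta'\p{t} = M\p{1 - \theta} t^{-\theta}$, which reads $M\p{1-\theta} r_n^{-\theta} \dist\p{\p{0,0}, \partial_L \Phi\p{x_n, y_n}} \geq 1$. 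Eliminating the distance and $\delta_{n-1}$ yields the master inequality $r_{n-1} - r_n \geq \kappa\, r_n^{2\theta}$ with $\kappa \defeq \p{M\p{1-\theta} C_0}^{-2}$, valid for all large $n$.

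For $\theta = 0$ the K\L{} inequality degenerates to $C_0 \delta_{n-1} \geq 1/M$, contradicting $\delta_{n-1} \to 0$ (Proposition \ref{prop:FBCD_summability}); thus the standing assumption $r_n > 0$ for all $n$ is impossible, and we are back to finite termination, which is case (1). For $\theta \in \opint{0}{1}$ I would analyse the master inequality as a scalar recursion. When $2\theta \leq 1$, the bound $r_n^{2\theta} \geq r_n$ for small $r_n$ turns it into $r_{n-1} \geq \p{1 + \kappa} r_n$, i.e. a geometric decay $r_n \leq B q^n$ with $q = \p{1+\kappa}^{-1} \in \opint{0}{1}$. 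When $2\theta > 1$, I would pass to $s_n \defeq r_n^{1 - 2\theta}$ and show, via the convexity of $t \mapsto t^{1-2\theta}$ together with a dichotomy according to whether $r_n \leq \tfrac12 r_{n-1}$ or not, that $s_n - s_{n-1}$ is bounded below by a fixed positive constant; this gives the sublinear rate $r_n = O\p{n^{-1/\p{2\theta - 1}}}$.

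It then remains to convert the rate for $r_n$ into one for the iterates, using $\norm{x_k - \bar x} \leq \sum_{j \geq k} \norm{x_j - x_{j+1}}$ (and likewise for $y$) together with $\norm{x_j - x_{j+1}} \leq c_x \delta_j$ and $\norm{y_j - y_{j+1}} \leq c_y \delta_j$, where $c_x, c_y$ stem from the positive lower bounds on the coefficients defining $\delta_j$. In the geometric regime $\theta \in \ocint{0}{\tfrac12}$ the crude bound $\delta_j \leq \sqrt{r_j - r_{j+1}} \leq \sqrt{r_j} \leq \sqrt{B}\, q^{j/2}$ is already summable, and summing the geometric tail yields $\norm{x_k - \bar x}, \norm{y_k - \bar y} \leq c\, \tilde q^{\,k}$ with $\tilde q = \sqrt q$, which is case (2). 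In the sublinear regime this crude bound fails, since $\sum_j \sqrt{r_j}$ may diverge, and this is the main obstacle. To overcome it I would sum the one-step estimate \eqref{eq:estimation_delta} over the tail to obtain, with $\Delta_k \defeq \sum_{n \geq k} \delta_n$ and $\varepsilon_k = M r_k^{1-\theta}$, the recursion $\p{1 - rC_0}\Delta_k \leq rC_0\, \delta_{k-1} + \tfrac1r \varepsilon_k$; here the $\varepsilon_k$-term carries the exponent $\tfrac{1-\theta}{2\theta - 1}$, while the $\delta_{k-1}$-term carries the strictly larger exponent $\tfrac{1}{2\p{2\theta-1}}$, so the former dominates and $\Delta_k = O\p{k^{-\p{1-\theta}/\p{2\theta - 1}}}$. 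Since $\norm{x_k - \bar x}, \norm{y_k - \bar y} \leq \max\set{c_x, c_y}\,\Delta_k$, this is precisely case (3). Finally, all three estimates are first obtained for $n \geq n_0$ and then extended to every $n \geq 0$ by enlarging the constant $c$ so as to absorb the finitely many initial terms.
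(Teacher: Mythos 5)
Your proof is correct, but it organizes the argument differently from the paper, so a comparison is worthwhile. For cases (2)--(3) the paper never derives a rate for the function values: starting from the tail-sum estimate in the proof of Theorem \ref{th1}, it uses the K\L{} inequality a second time to convert $\varepsilon_{n+1} = M r_{n+1}^{1-\theta}$ into a power of $\delta_n$ (via $\varepsilon_{n+1} \leq M^{1/\theta}\p{1-\theta}^{\p{1-\theta}/\theta}\norm{\p{x_{n+1}^*,y_{n+1}^*}}^{\p{1-\theta}/\theta}$ and \eqref{eq:estimation_norm_delta}), obtaining the self-contained recursion \eqref{eq:convergence_rate_basic_estimate2} for the tails $\Delta_n \defeq \sum_{k\geq n}\delta_k$ in terms of $\delta_n = \Delta_n - \Delta_{n+1}$ alone; the two regimes then follow by deciding which of the two terms dominates, with an integral-comparison argument handling $\theta > \frac12$. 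You instead first prove the scalar ``master inequality'' $r_{n-1}-r_n \geq \kappa\, r_n^{2\theta}$ and extract rates for the function-value gap ($q^n$ resp.\ $n^{-1/\p{2\theta-1}}$ by the standard convexity/dichotomy analysis), and only then transfer them to the iterates through the summed version of \eqref{eq:estimation_delta}, keeping the $\varepsilon_k$ term explicit. Both routes rest on the same three ingredients (the descent estimates \eqref{eq:est_Phi_norm_x}--\eqref{eq:est_Phi_norm_y}, Lemma \ref{lem:subgradient_estimation} with \eqref{eq:estimation_norm_delta}, and the uniform K\L{} inequality), and your exponent bookkeeping in case (3) is right: $\delta_{k-1} = O\p{k^{-1/\p{2\p{2\theta-1}}}}$ decays strictly faster than $\varepsilon_k = O\p{k^{-\p{1-\theta}/\p{2\theta-1}}}$ precisely because $\frac12 > 1-\theta$, so the $\varepsilon_k$ term dominates and yields the claimed rate --- and you correctly identified that the naive bound $\delta_j \leq \sqrt{r_j}$ alone would not suffice in that regime. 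What your version buys is modularity and an intermediate result of independent interest (convergence rates for $\Phi\p{x_n,y_n} - \ell$); what the paper's version buys is economy, since it never needs the function-value rate and treats both regimes with a single recursion in $\Delta_n$. Case (1) is essentially identical in the two arguments: assuming non-stationarity, Proposition \ref{prop:fixed_points} keeps the K\L{} inequality applicable, and $M\norm{\p{x_n^*,y_n^*}} \geq 1$ contradicts $\norm{\p{x_n^*,y_n^*}} \to 0$.
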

\begin{proof}
  \begin{enumerate}
    \item First, let $\theta = 0$. Assume to the contrary (see Proposition \ref{prop:fixed_points}) that for any $n \geq 0$, $\p{x_{n+1}, y_{n+1}} \neq \p{x_n, y_n}$. We have $\vartheta'\p{t} = M$ for all $t > 0$ and thus, by \eqref{eq:KL_derivatives},
      \[
        M \cdot \norm{\p{x_n^*, y_n^*}} \geq 1 \ \mbox{for any} \ n \geq 1,
      \]
      which contradicts either Lemma \ref{lem:subgradient_estimation} or Proposition \ref{prop:FBCD_summability}.
  \end{enumerate}
  Before considering the other cases, assume from now on that $\p{x_n, y_n}$ is not a critical point of $\Phi$ for any $n\geq 0$. Notice that $\vartheta'\p{t} = \p{1 - \theta} M t^{-\theta}$.
  In the proof of Theorem \ref{th1}, we have shown that for $0<r<\frac{1}{C_0}$
  \begin{align*}
    \sum_{k = n_0+1}^\infty \delta_k & \leq \frac{rC_0\delta_{n_0}}{1 - rC_0} + \frac{\varepsilon_{n_0 + 1}}{r\p{1 - rC_0}} \\
    &= \frac{rC_0\delta_{n_0}}{1 - rC_0} + \frac{M\p{\Phi\p{x_{n_0 + 1}, y_{n_0 + 1}} - \Phi\p{\bar x, \bar y}}^{1 - \theta}}{r\p{1 - rC_0}} \\
    &= \frac{rC_0\delta_{n_0}}{1 - rC_0} + \frac{M^{1 + \frac{1 - \theta}{\theta}}\p{1 - \theta}^{\frac{1 - \theta}{\theta}}}{r\p{1 - rC_0}\vartheta'\p{\Phi\p{x_{n_0 + 1}, y_{n_0 + 1}} - \Phi\p{\bar x, \bar y}}^{\frac{1-\theta}{\theta}}} \\
    &\leq \frac{rC_0\delta_{n_0}}{1 - rC_0} + \frac{M^{\frac{1}{\theta}}\p{1 - \theta}^{\frac{1 - \theta}{\theta}}\norm{\p{x_{n_0 + 1}^*, y_{n_0 + 1}^*}}^{\frac{1-\theta}{\theta}}}{r\p{1 - rC_0}},
  \end{align*}
  where the last inequality follows from the K\L{} property (notice that $\Phi\p{x_{n_0 + 1}, y_{n_0 + 1}} - \Phi\p{\bar x, \bar y} > 0$ because we assumed that $\p{x_{n_0 + 1}, y_{n_0 + 1}}$ is not a critical point of $\Phi$). We can repeat this calculation for any $n\geq n_0+1$ instead of $n_0+1$, because such an $n$ would meet the criteria according to which we chose $n_0+1$. Thus, we obtain from \eqref{eq:estimation_norm_delta}, for $n\geq n_0+1$,
 \begin{equation}\label{eq:convergence_rate_basic_estimate2}
    \sum_{k = n + 1}^\infty \delta_k \leq \frac{rC_0\delta_n}{1 - rC_0} + \frac{M^{\frac{1}{\theta}} \p{1 - \theta}^{\frac{1 - \theta}{\theta}} \p{C_0 \delta_n}^{\frac{1 - \theta}{\theta}}}{r\p{1 - rC_0}}.
  \end{equation}
  The rest of the proof follows in the lines of \cite[Theorem 2]{AttouchBolte:2009}:

  \begin{enumerate}\setcounter{enumi}{1}
    \item Let $0 < \theta \leq \frac{1}{2}$. Then $1 \leq \frac{1 - \theta}{\theta} < +\infty$, so $\delta_n \to 0$ as $n\to \infty$ implies that the first term on the right-hand side of \eqref{eq:convergence_rate_basic_estimate2} is the dominant one. Therefore, we find $n_1 \geq n_0+1$ and $C_1 > 0$ such that
      \[
        \sum_{k = n + 1}^{\infty} \delta_k \leq C_1 \delta_n = C_1 \p{\sum_{k = n}^{\infty} \delta_k - \sum_{k = n + 1}^\infty \delta_k}
      \]
      for any $n\geq n_1$. Thus, for any  $n\geq n_1$,
      \[
        \sum_{k = n + 1}^{\infty} \delta_k \leq \frac{C_1}{1 + C_1}  \sum_{k = n}^{\infty} \delta_k.
      \]
      By induction, for any $n\geq n_1+1$,
      \[
        \delta_{n} \leq \sum_{k = n}^\infty \delta_k \leq \p{\frac{C_1}{1 + C_1}}^{n - n_1} \sum_{k = n_1}^\infty \delta_k,
      \]
      which proves the assertion.
    \item Let $\frac{1}{2} < \theta < 1$. Then $0 < \frac{1 - \theta}{\theta} < 1$, so $\delta_n \to 0$ as $n\to \infty$ implies that the second term on the right-hand side of \eqref{eq:convergence_rate_basic_estimate2} is the dominant one. Therefore, we find $n_1 \geq n_0+1$ and $C_1 > 0$ such that
      \[
        \sum_{k = n+1}^\infty \delta_k \leq C_1 \delta_n^{\frac{1 - \theta}{\theta}}
      \]
      for any $n\geq n_1$. Then, for any $n\geq n_1$,
      \begin{align*}
        \p{\sum_{k = n+1}^\infty \delta_k}^{\frac{\theta}{1 - \theta}} \leq C_1^{\frac{\theta}{1 - \theta}} \p{\sum_{k = n}^\infty \delta_k - \sum_{k = n+1}^\infty \delta_k}.
      \end{align*}
      We define $h: \opint{0}{+\infty} \to \RR$, $h\p{s} = s^{-\frac{\theta}{1 - \theta}}$ and notice that $h$ is monotonically decreasing as is the sequence $\p{\sum_{k = n}^\infty \delta_k}_{n \geq n_1}$. Therefore, for any $n\geq n_1$,
      \begin{align*}
        1 &\leq C_1^{\frac{\theta}{1 - \theta}} h\p{\sum_{k = n+1}^\infty \delta_k} \p{\sum_{k = n}^\infty \delta_k - \sum_{k = n + 1}^\infty \delta_k} \\
        &\leq C_1^{\frac{\theta}{1 - \theta}} \int_{\sum_{k = n + 1}^\infty \delta_k}^{\sum_{k = n}^\infty \delta_k} h\p{s} \dd s \\
        &= -C_1^{\frac{\theta}{1 - \theta}} \frac{1 - \theta}{2\theta - 1}\p{ \p{\sum_{k = n}^\infty \delta_k}^{-\frac{2\theta - 1}{1 - \theta}} - \p{\sum_{k = n + 1}^\infty \delta_k}^{-\frac{2\theta - 1}{1 - \theta}}}.
      \end{align*}
      Thus, by induction, for any $n\geq n_1+1$,
      \[
        \p{\sum_{k = n}^\infty \delta_k}^{-\frac{2\theta - 1}{1 - \theta}} + \frac{\p{2\theta - 1}\p{n - n_1}}{C_1^{\frac{\theta}{1 - \theta}}\p{1 - \theta}} \leq \p{\sum_{k = n_1}^\infty \delta_k}^{-\frac{2\theta - 1}{1 - \theta}}.
      \]
      The assertion follows by
      \[
        \delta_n \leq \sum_{k = n}^\infty \delta_k \leq \p{\p{\sum_{k = n_1}^\infty \delta_k}^{-\frac{2\theta - 1}{1 - \theta}} + \frac{\p{2\theta - 1}\p{n - n_1}}{C_1\p{1 - \theta}}}^{-\frac{1 - \theta}{2\theta - 1}}\qquad \text{for any } n \geq n_1+1. \qedhere
      \]
  \end{enumerate}
\end{proof}

\section{Application to image processing}
Consider an image of the size $m \times n$ pixels. (For the sake of simplicity, we consider gray-scale pictures only.) It can be represented by a vector $x \in \Hh \defeq \RR^{mn}$ of size $mn$ with entries in $\clint{0}{1}$ (where $0$ represents pure black and $1$ represents pure white).

The original image $x \in \Hh$ is assumed to be blurred by a linear operator $L: \Hh \to \Hh$ (e.g. the camera is out of focus or in movement during the exposure). Furthermore, it is corrupted with a noise $\nu$, so that only the result $b = Lx + \nu$ is known to us. We want to reconstruct the original image $x$ by considering the minimisation problem
\[
  \min_{x \in \Hh} \p{\frac{\mu}{2} \norm{Lx - b}^2 + J\p{Dx}},
\]
where $\norm{\cdot}$ denotes the usual Euclidean norm, $\mu > 0$ is a regularisation parameter, $D: \RR^{mn} \to \RR^{2mn}$ is the discrete gradient operator given by $Dx = \p{K_1x, K_2x}$, where
\begin{align*}
  K_1: \Hh &\to \Hh, \p{K_1 x}_{i, j} \defeq \begin{cases}x_{i + 1, j} - x_{i, j},& i = 1, \ldots, m-1; j = 1, \ldots, n; \\ 0,& i = m; j = 1, \ldots, n\end{cases} \\
  K_2: \Hh &\to \Hh, \p{K_2 x}_{i, j} \defeq \begin{cases}x_{i, j + 1} - x_{i, j},& i = 1, \ldots, m; j = 1, \ldots, n - 1; \\ 0,& i = 1,\ldots, m; j = n,\end{cases}
\end{align*}
and $J: \Hh \to\RR$ is a regularising functional penalising noisy images. We want to compare several choises of the functional $J$ proposed by \cite{GassoRakotomamonjyCanu:2009, LouZengOsherXin:2015}, all of which have in common that they want to induce sparsity of $Dx$, i.e. having many components equal to zero.

The \emph{smoothly clipped absolute deviation} (SCAD) penalty was introduced by Fan and Li in \cite{FanLi:2001}. It is defined by
\[
  \mathrm{SCAD}_{\lambda, a}\p{z} = \sum_{j = 1}^{2mn} g_{\lambda, a}\p{z_j},
\]
where $\lambda > 0$, $a > 1$ and
\begin{align*}
  g_{\lambda, a}\p{z_j} &= \begin{cases}\lambda \abs{z_j} &\text{if } \abs{z_j} \leq \lambda, \\ \frac{-\abs{z_j}^2 + 2a\lambda \abs{z_j} - \lambda^2}{2\p{a-1}} &\text{if } \lambda < \abs{z_j} \leq a\lambda, \\ \frac{\p{a + 1} \lambda^2}{2} &\text{if } \abs{z_j} > a\lambda.\end{cases} \\
  &= \lambda \abs{z_j} - \begin{cases}0 &\text{if } \abs{z_j} \leq \lambda, \\ \frac{\p{\abs{z_j} - \lambda}^2}{2\p{a-1}} &\text{if } \lambda < \abs{z_j} \leq a\lambda, \\ \lambda\abs{z_j} - \frac{\p{a + 1} \lambda^2}{2} &\text{if } \abs{z_j} > a\lambda.\end{cases}
\end{align*}
Denoting the part after the curly brace as $h_{\lambda, a}\p{z_j}$ and $h_{\lambda, a}\p{z}:= \sum_{j=1}^{2mn} h_{\lambda, a}\p{z_j}$, we have
\[
  \Prox_{\gamma h_{\lambda, a}^*}\p{z} = \begin{cases}-\lambda & \text{if } z \leq -\p{1 + \gamma a}\lambda, \\ -\frac{z + \gamma\lambda}{1 + \gamma a - \gamma}& \text{if } -\p{1 + \gamma a}\lambda \leq z \leq -\gamma\lambda, \\ 0 & \text{if } \abs{z} \leq \gamma\lambda, \\ \frac{z - \gamma\lambda}{1 + \gamma a - \gamma}& \text{if } \gamma\lambda \leq z \leq \p{1 + \gamma a}\lambda, \\ \lambda &\text{if } z \geq \p{1 + \gamma a}\lambda.
  \end{cases}
\]

The \emph{Zhang penalty} \cite{Zhang:2009} is defined by
\[
  \mathrm{Zhang}_{a}\p{z} = \sum_{j = 1}^{2mn} g_{a}\p{z_j},
\]
where $a > 0$ and
\begin{align*}
  g_{a} \p{z_j} &= \begin{cases}\frac{1}{a}\abs{z_j}& \text{if } \abs{z_j} < a, \\ 1& \text{if } \abs{z_j} \geq a.\end{cases} \\
  &= \frac{1}{a} \abs{z_j} - \begin{cases}0& \text{if } \abs{z_j} < a, \\ \frac{1}{a} \p{\abs{z_j} - a}& \text{if } \abs{z_j} \geq a.\end{cases}
\end{align*}
Denoting the part after the curly brace as $h_{a}\p{z_j}$ and $h_{a}\p{z}:= \sum_{j=1}^{2mn} h_{a}\p{z_j}$, we have
\[
  \Prox_{\gamma h_a^*}\p{z} = \begin{cases}-\frac{1}{a}& \text{if } z \leq -\frac{1}{a} - \gamma a, \\ z + \gamma a & \text{if } -\frac{1}{a} - \gamma a \leq z \leq -\gamma a, \\ 0& \text{if } -\gamma a \leq z \leq \gamma a, \\ z - \gamma a &\text{if } \gamma a \leq z \leq \frac{1}{a} + \gamma a, \\ \frac{1}{a} &\text{if }z \geq \frac{1}{a} + \gamma a.\end{cases}
\]

The \emph{LZOX penalty} \cite{LouZengOsherXin:2015} is defined by
\[
  \mathrm{LZOX}_a\p{z} = \norm{Dx}_{\ell^1} - a \norm{Dx}_{\times},
\]
where $\norm{\cdot}_{\ell^1}$ denotes (as usual) the sum of the absolute values and
\[
  \norm{\p{u, v}}_\times \defeq \sum_{i = 1}^m \sum_{j = 1}^n \sqrt{u_{i, j}^2 + v_{i, j}^2},
\]
where $y = \p{u, v}$ is the splitting according to the definition of $D$. The algorithm \eqref{eq:FBDC_alg:1}--\eqref{eq:FBDC_alg:2} can now be applied to any of the models described above, since the models are written as d. c. problems and the components are easily accessible for computation, with the exception of the function $\norm{\cdot}_{\ell^1} \circ D$, see \cite{BotHendrich:2014}. For the latter, see the following section.

\subsection{The proximal point of the anisotropic total variation}\label{sec:submethod}
In order to apply Algorithm \eqref{eq:FBDC_alg:1}--\eqref{eq:FBDC_alg:2} to any of the problems, we have to calculate the proximal point of the anisotropic total variation by solving the optimization problem
\begin{equation}\label{eq:aniso_prox}
  \inf \setcond{\frac{1}{2\gamma} \norm{x - b}^2 + \norm{Dx}_{\ell^1}}{x \in \Hh}
\end{equation}
for some $\gamma > 0$ and $b \in \Hh$ in each step. The Fenchel dual problem \cite[Chapter 19]{BauschkeCombettes:2011} is given by
\begin{equation}\label{eq:aniso_prox_dual}
  \inf \setcond{\frac{\gamma}{2}\norm{D^* x^*}^2 - \inpr{b}{D^* x^*}}{x^* \in \Gg, \norm{x^*}_{\ell^\infty} \leq 1}.
\end{equation}
Instead of solving \eqref{eq:aniso_prox}, we could also solve \eqref{eq:aniso_prox_dual} (see \cite{BeckTeboulle:2014}), as the following result shows.

\begin{lemma}
  Let $x^* \in \Gg$ be a solution of \eqref{eq:aniso_prox_dual}. Then $x = b - \gamma D^* x^*$ is a solution of \eqref{eq:aniso_prox}.
\end{lemma}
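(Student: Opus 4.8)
The plan is to verify directly that $x = b - \gamma D^* x^*$ satisfies the first-order optimality condition for the convex primal problem \eqref{eq:aniso_prox}, feeding in the optimality condition that $x^*$ enjoys as a minimizer of \eqref{eq:aniso_prox_dual}. I would write $f\p{x} = \frac{1}{2\gamma}\norm{x - b}^2$ and $g = \norm{\cdot}_{\ell^1}$, so that the primal objective is $F = f + g \circ D$. Since $f$ is differentiable everywhere and $g$ is finite-valued and continuous on all of $\Gg$, the sum and chain rules for the convex subdifferential hold without any further qualification (the qualification $0 \in \mathrm{int}\p{\dom g - \ran D}$ is automatic because $\dom g = \Gg$), yielding $\partial F\p{x} = \frac{1}{\gamma}\p{x - b} + D^* \partial g\p{Dx}$. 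As $F$ is convex, $x$ solves \eqref{eq:aniso_prox} if and only if $0 \in \partial F\p{x}$, i.e. if and only if there is some $p \in \partial g\p{Dx}$ with $\frac{1}{\gamma}\p{x - b} + D^* p = 0$. Now the definition $x = b - \gamma D^* x^*$ is exactly equivalent to the coupling relation $\frac{1}{\gamma}\p{x - b} = -D^* x^*$, so the whole statement reduces to showing that $x^*$ itself is an admissible choice of subgradient, i.e. $x^* \in \partial g\p{Dx}$; then taking $p = x^*$ closes the argument.

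To reach $x^* \in \partial g\p{Dx}$ I would record the conjugate computation $g^* = \iota_{B}$, the indicator of the $\ell^\infty$-unit ball $B = \setcond{x^* \in \Gg}{\norm{x^*}_{\ell^\infty} \leq 1}$, which is precisely the feasible set appearing in \eqref{eq:aniso_prox_dual}. Since $g = \norm{\cdot}_{\ell^1}$ is proper, convex and lower semicontinuous, the conjugate subdifferential inversion applies: $x^* \in \partial g\p{Dx}$ holds if and only if $Dx \in \partial g^*\p{x^*} = N_B\p{x^*}$, where $N_B$ denotes the normal cone of $B$. Thus the task reduces further to deriving $Dx \in N_B\p{x^*}$ from the dual optimality of $x^*$.

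For this last step I would write the dual objective as $\Psi\p{x^*} = \frac{\gamma}{2}\norm{D^* x^*}^2 - \inpr{b}{D^* x^*} + \iota_{B}\p{x^*}$. Its smooth part is differentiable with gradient $D\p{\gamma D^* x^* - b} = -Dx$ (using again $x = b - \gamma D^* x^*$), so the minimality condition $0 \in \partial \Psi\p{x^*}$ reads $0 \in -Dx + N_B\p{x^*}$, that is, $Dx \in N_B\p{x^*}$. Chaining this with the inversion of the previous paragraph gives $x^* \in \partial g\p{Dx}$, and combined with the coupling relation this produces $0 \in \partial F\p{x}$, proving that $x$ solves \eqref{eq:aniso_prox}. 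The proof is essentially routine convex-duality bookkeeping, and the only point demanding genuine care—the mild "obstacle"—is the clean justification of the subdifferential calculus: the unconditional validity of the sum and chain rules, and especially the equivalence $Dx \in \partial g^*\p{x^*} \iff x^* \in \partial g\p{Dx}$, all of which I would pin to the fact that $g = \norm{\cdot}_{\ell^1}$ is real-valued and continuous, so that no constraint qualification is ever needed and the sign conventions line up with the minimization form of \eqref{eq:aniso_prox_dual}.
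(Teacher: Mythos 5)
Your proposal is correct and takes essentially the same route as the paper's proof: both convert the dual optimality condition $0 \in D\p{\gamma D^* x^* - b} + \partial\norm{\cdot}_{\ell^1}^*\p{x^*}$ into the primal optimality condition $0 \in \partial\p{\frac{1}{2\gamma}\norm{\p{\cdot} - b}^2 + \norm{D\p{\cdot}}_{\ell^1}}\p{x}$ via the Fenchel subdifferential inversion for $\norm{\cdot}_{\ell^1}$ and the sum/chain rule, with the coupling $x = b - \gamma D^* x^*$ supplying the substitution. The paper compresses this into a three-line chain of implications (citing \cite[Example 19.7]{BauschkeCombettes:2011}), while you spell out the same steps in detail, including the constraint-qualification-free validity of the calculus rules.
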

\begin{proof}
  See \cite[Example 19.7]{BauschkeCombettes:2011}. In short:
  \begin{align*}
    0 \in D\p{\gamma D^* x^* - b} + \partial \norm{\cdot}_{\ell^1}^*\p{x^*} &\implies D^* x^* \in D^* \partial\norm{\cdot}_{\ell^1}\p{D\p{b - \gamma D^* x^*}} \\
    &\implies \frac{1}{\gamma} \p{b - x} \in D^* \partial \norm{\cdot}_{\ell^1} \p{Dx}\\
    &\Longleftrightarrow 0 \in \partial \left(\frac{1}{2\gamma} \norm{(\cdot) - b}^2 + \norm{D (\cdot)}_{\ell^1} \right)(x). \qedhere
  \end{align*}
\end{proof}
To the formulation \eqref{eq:aniso_prox_dual}, the Forward-Backward algorithm can be applied, since the objective function  is differentiable and the feasible set is easy to project on.


\subsection{Numerical results}
We implemented the FBDC algorithm applied to the model described above and tested the MATLAB code on a PC with Intel Core i5 4670S ($4\times$ 3.10GHz) and 8GB DDR3 RAM (1600MHz). Our implementation used the method described in Section \ref{sec:submethod} until the $\ell^\infty$ distance between two iterations was smaller than $10^{-4}$. Both stepsizes were chosen as $\mu_n = \gamma_n = \frac{1}{8\mu}$ for all $n\geq 0$. As initial value, we chose $x_0 = b$ and picked $v_0 \in \partial h\p{Kx_0}$.

We picked the image \texttt{texmos3} from \url{http://sipi.usc.edu/database/database.php?volume=textures&image=64} and convolved it with a Gaussian kernel with 9 pixels standard devitation. Afterwards we added white noise with standard deviation $50/255$, projected the pixels back to the range $\clint{0}{1}$ and saved the image in TIFF format, rounding the brightness values to multiples of $1/255$. See Figure \ref{fig:images_LZOX} for original, blurry and reconstructed image.

The \emph{improvement in signal-to-noise ratio} or \emph{ISNR value} of a reconstruction is given by
\[
  \mathrm{ISNR}\p{x_k} = 10 \log_{10} \p{\frac{\norm{x - b}^2}{\norm{x - x_k}^2}},
\]
where $x$ is the (usually unknown) original, $b$ is the known blurry and noisy and $x_k$ is the reconstructed image. For the ISNR values after 50 iterations, see Tables \ref{tab:LZOX} and \ref{tab:Zhang}. The development of the ISNR values over the iterations is shown in Figure \ref{fig:development}.

We see that the nonconvex models provide reasonable reconstructions of the original image and the best numerical performance for this particular choice of the stepsizes and the number of iterations is not achieved for the convex model (LZOX with $\alpha = 0$), but for the nonconvex models.
\begin{table}\centering
  \begin{tabular}{cccccccc} \toprule
    & $\alpha = 0.00$ & $\alpha = 0.2$ & $\alpha = 0.4$ & $\alpha = 0.5$ & $\alpha = 0.6$ & $\alpha = 0.8$ & $\alpha = 1.0$ \\ \midrule
    $\mu = \phantom{000}1.0$  & $-3.0288$ & $-4.2266$ & $-3.7637$ & $-3.6569$ & $-3.5150$ & $-4.3590$ & $-13.701$ \\
    $\mu = \phantom{00}10.0$ & $5.9227$ & $6.26615$ & $6.414791$ & $6.44871$ & $6.45780$ & $6.28863$ & $4.301090$ \\
    $\mu = \phantom{00}20.0$ & $6.76613$ & $6.90005$ & $\mathbf{6.93064}$ & $6.917926$ & $6.88018$ & $6.61521$ & $5.305623$ \\
    $\mu = \phantom{00}50.0$ & $6.81752$ & $6.78308$ & $6.65411$ & $6.4923$ & $6.36250$ & $5.780558$ & $4.741993$ \\
    $\mu = \phantom{0}100.0$ & $5.29597$ & $5.23264$ & $5.05189$ & $4.91247$ & $4.739717$ & $4.287092$ & $3.696120$ \\
    $\mu = \phantom{0}200.0$ & $3.088196$ & $3.060511$ & $2.985871$ & $2.930448$ & $2.863122$ & $2.693096$ & $2.477708$ \\
    $\mu = \phantom{0}500.0$ & $1.317390$ & $1.312168$ & $1.298834$ & $1.288983$ & $1.277010$ & $1.246724$ & $1.208036$ \\
    $\mu = 1000.0$ & $0.692487$ & $0.691049$ & $0.687585$ & $0.685057$ & $0.682000$ & $0.674272$ & $0.664401$ \\ \bottomrule
  \end{tabular}
  \caption{LZOX after 50 iterations}\label{tab:LZOX}
\end{table}

\begin{table}\centering
  \begin{tabular}{ccccccc} \toprule
    & $\alpha = 0.01$ & $\alpha = 0.03$ & $\alpha = 0.1$ & $\alpha = 0.3$ & $\alpha = 1.0$ & $\alpha = 3.0$ \\ \midrule
    $\mu = \phantom{000}1.0$ & $-43.708$ & $-33.711$ & $-23.148$ & $-13.846$ & $-3.0288$ & $2.4922$ \\
    $\mu = \phantom{00}10.0$ & $-18.781$ & $-9.9406$ & $-3.2070$ & $2.5442$ & $5.9227$ & $\mathbf{6.97777}$ \\
    $\mu = \phantom{00}20.0$ & $-11.270$ & $-4.8428$ & $0.43533$ & $4.7768$ & $6.76613$ & $6.57299$ \\
    $\mu = \phantom{00}50.0$ & $-4.8333$ & $-1.05553$ & $2.63959$ & $6.46109$ & $6.81752$ & $3.952101$ \\
    $\mu = \phantom{0}100.0$ & $-1.7546$ & $-0.14560$ & $3.16532$ & $6.90202$ & $5.29597$ & $2.129705$ \\
    $\mu = \phantom{0}200.0$ & $-0.41418$ & $0.0619477$ & $2.98543$ & $6.38513$ & $3.088196$ & $1.110186$ \\
    $\mu = \phantom{0}500.0$ & $0.0077144$ & $0.121807$ & $2.101321$ & $3.816813$ & $1.317390$ & $0.482406$ \\
    $\mu = 1000.0$ & $0.0528014$ & $0.127592$ & $1.423684$ & $2.070959$ & $0.692487$ & $0.271777$ \\ \bottomrule
  \end{tabular}
  \caption{Zhang after 50 iterations}\label{tab:Zhang}
\end{table}

\begin{figure}\centering
  \subfigure[]{
    \begin{tikzpicture}
      \begin{axis}[title=LZOX model,
          xlabel={iterations},
          ylabel={ISNR},
          legend entries={{$\mu = 20$, $\alpha = 0.4$}, {$\mu = 50$, $\alpha = 0\phantom{.0}$}, {$\mu = 20$, $\alpha = 0\phantom{.0}$}, {$\mu = 20$, $\alpha = 1\phantom{.0}$}},
        legend style ={at={(0.5, -0.17)}, anchor = north}]
        \addplot[blue] table {LZOX1.table};
        \addplot[green] table {LZOX2.table};
        \addplot[red] table {LZOX3.table};
        \addplot[black] table {LZOX4.table};
      \end{axis}
    \end{tikzpicture}
  }
  \subfigure[]{
    \begin{tikzpicture}
      \begin{axis}[title=Zhang model,
          xlabel={iterations},
          ylabel={ISNR},
          legend entries={{$\mu = 10$, $\alpha = 3\phantom{.0}$}, {$\mu = 20$, $\alpha = 1\phantom{.0}$}, {$\mu = 50$, $\alpha = 0.3$}, {$\mu = 20$, $\alpha = 1\phantom{.0}$}},
        legend style ={at={(0.5, -0.17)}, anchor = north}]
        \addplot[blue] table {Zhang1.table};
        \addplot[green] table {Zhang2.table};
        \addplot[red] table {Zhang3.table};
      \end{axis}
    \end{tikzpicture}
  }
  \caption{ISNR values vs. iterations}\label{fig:development}
\end{figure}
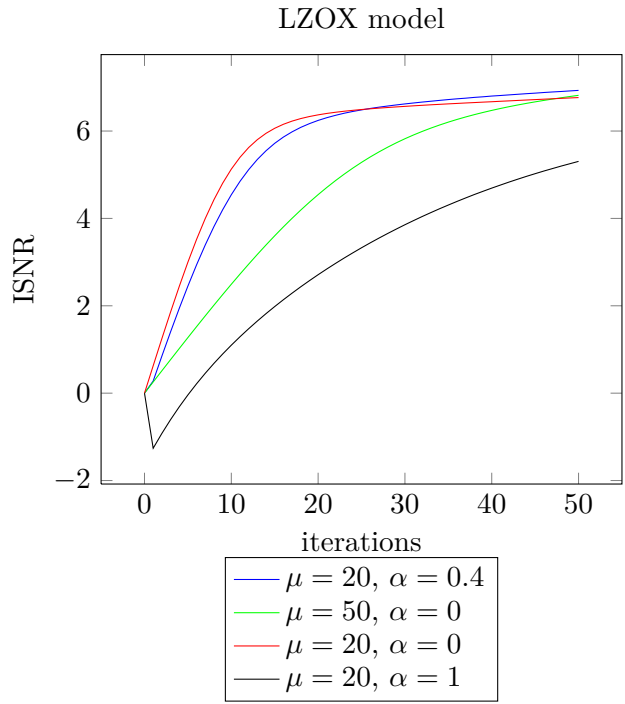
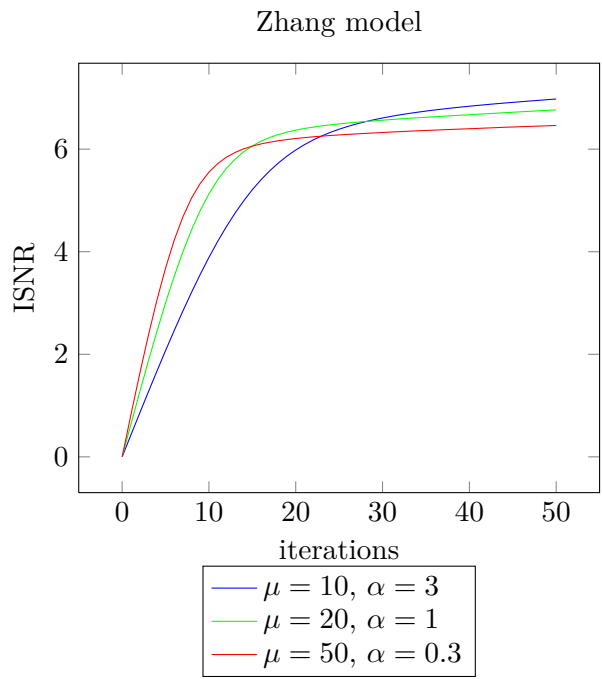
\begin{figure} \centering
  \subfigure[Original image]{\includegraphics[width=0.3\textwidth]{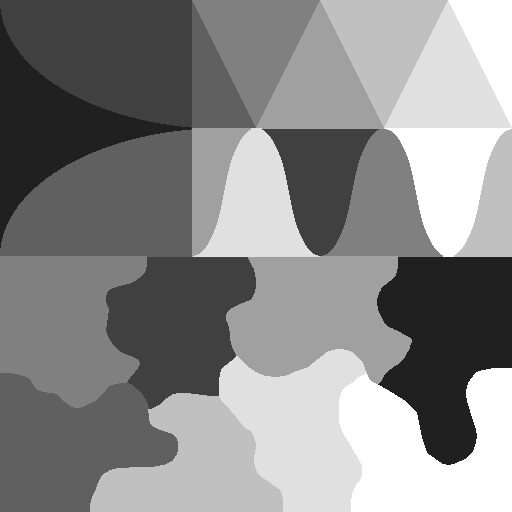}}
  \subfigure[Blurry image]{\includegraphics[width=0.3\textwidth]{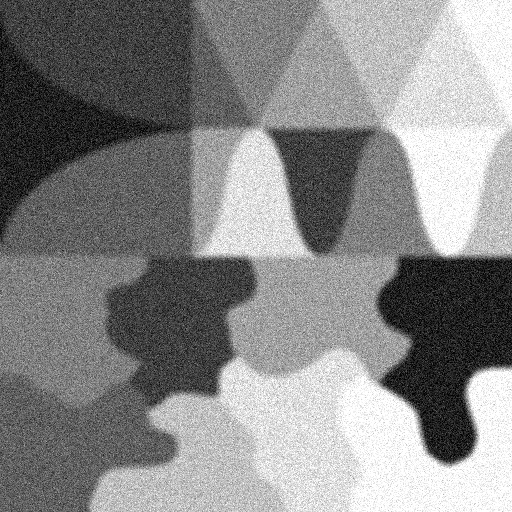}}
  \subfigure[LZOX, $\mu = 20$, $\alpha = 0.4$]{\includegraphics[width=0.3\textwidth]{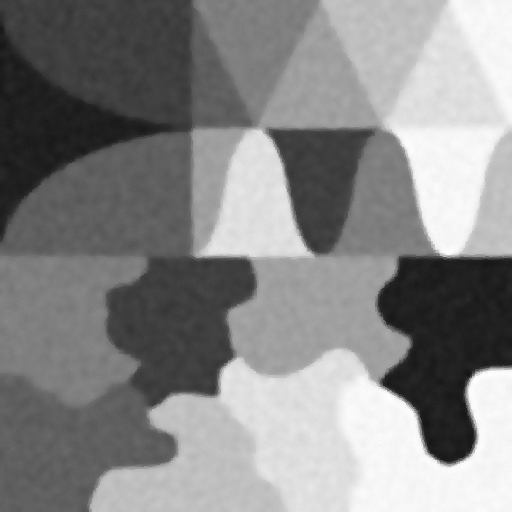}}
  \subfigure[LZOX, $\mu = 20$, $\alpha = 1$]{\includegraphics[width=0.3\textwidth]{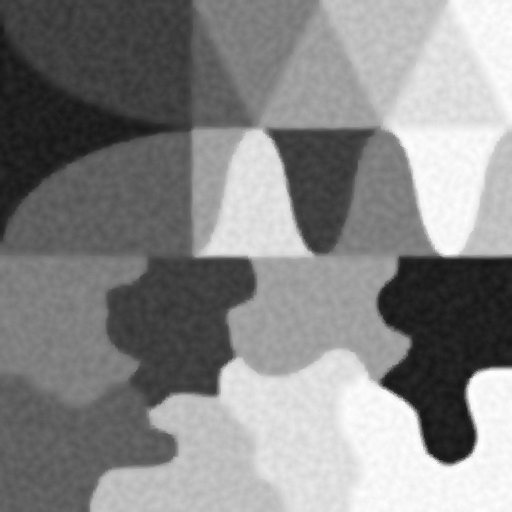}}
  \subfigure[LZOX, $\mu = 50$, $\alpha = 0$]{\includegraphics[width=0.3\textwidth]{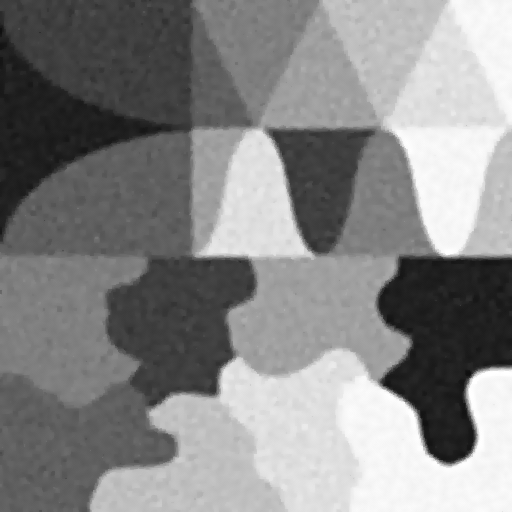}}
  \subfigure[Zhang, $\mu = 10$, $\alpha = 3$]{\includegraphics[width=0.3\textwidth]{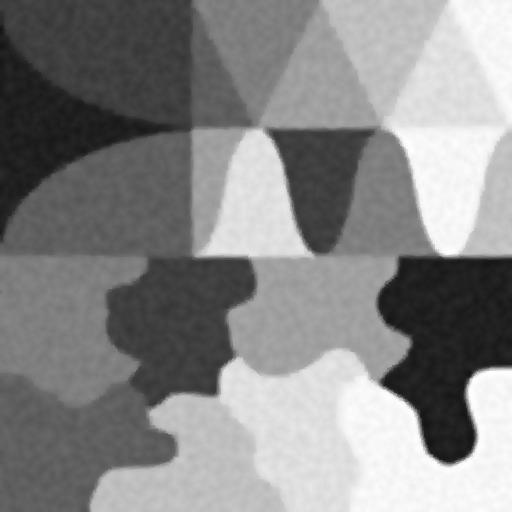}}
  \subfigure[Zhang, $\mu = 20$, $\alpha = 1$]{\includegraphics[width=0.3\textwidth]{texmos3_20_1.png}}
  \subfigure[Zhang, $\mu = 100$, $\alpha = 0.1$]{\includegraphics[width=0.3\textwidth]{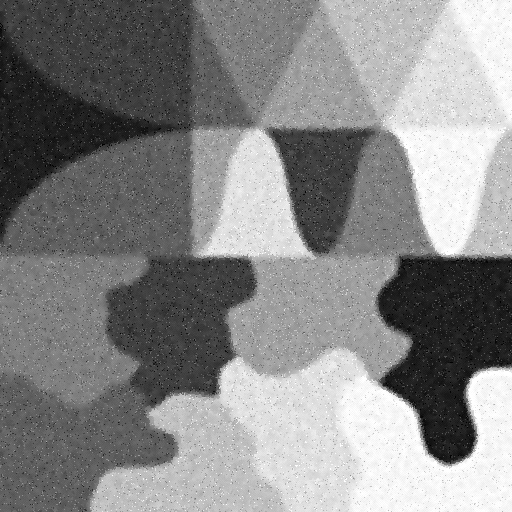}}
  \caption{Original image, blurry and noisy image and reconstructions.}\label{fig:images_LZOX}
\end{figure}

\section{Acknowledgements}
The authors are grateful to Joseph Salmon for pointing their attraction on the paper \cite{GassoRakotomamonjyCanu:2009}.

\end{document}